\documentclass[11pt]{article}
\usepackage[utf8]{inputenc}
\usepackage[margin=1.1in]{geometry}
\usepackage{amsmath,amssymb,amsthm,mathtools,mathrsfs}
\usepackage[pdfpagelabels,colorlinks,linkcolor=blue,citecolor=blue]{hyperref}
\usepackage[capitalize,nameinlink]{cleveref}
\usepackage[shortlabels]{enumitem}
\usepackage{booktabs}
\usepackage{graphicx}
\usepackage{subcaption}

\newtheorem{theorem}{Theorem}[section]
\newtheorem{lemma}[theorem]{Lemma}
\newtheorem{corollary}[theorem]{Corollary}
\theoremstyle{definition}
\newtheorem{definition}[theorem]{Definition}
\newtheorem{example}[theorem]{Example}
\theoremstyle{remark}
\newtheorem{remark}[theorem]{Remark}

\crefformat{equation}{\textup{#2(#1)#3}}
\Crefformat{equation}{#2Equation~\textup{(#1)}#3}
\crefdefaultlabelformat{#2\textup{#1}#3}

\newcommand{\norm}[1]{\left\lVert#1\right\rVert}
\newcommand{\scal}[2]{\left\langle{#1},{#2}\right\rangle}
\DeclareMathOperator{\Id}{Id}
\DeclareMathOperator{\aff}{aff}
\DeclareMathOperator{\dist}{dist}
\DeclareMathOperator{\inte}{int}
\DeclareMathOperator{\circum}{circ}
\usepackage{bbm}
\newcommand{\re}{\mathbbm{R}}
\newcommand{\na}{\mathbbm{N}}
\newcommand{\cCRM}{\ensuremath{\mathrm{cCRM}}}
\newcommand{\pCRMOp}{\ensuremath{\mathscr{C}}}
\allowdisplaybreaks

\begin{document}
\title{\textbf{Q-quadratic convergence of the centralized circumcentered-reflection method under a relative interior condition}\thanks{The author was supported by the National Science
        Foundation, Grant DMS-2307328, and by an internal
        grant from NIU.}}
\author{Yunier Bello-Cruz\\[2pt]
{\small Department of Mathematical Sciences,
        Northern Illinois University}\\
{\small DeKalb, IL 60115, USA\quad
        \texttt{yunierbello@niu.edu}}}
\date{\small arXiv:2604.11450}
\maketitle

\begin{abstract}
\noindent The centralized circumcentered-reflection method
(\cCRM) of Behling, Bello-Cruz, Iusem, and
Santos~\cite{Behling:2024} is known to converge
superlinearly for the feasibility problem
$\operatorname{find}\;z\in X\cap Y$ under a $\mathcal{C}^1$
smoothness assumption on the boundaries of $X$ and $Y$.
We sharpen this to a quantitative rate: when the boundaries
are $\mathcal{C}^2$ near the limit point $\bar z$, \cCRM\
converges Q-quadratically, with an asymptotic constant
\(
  2\max(\kappa_X,\kappa_Y)/\omega
\)
governed by the boundary curvatures $\kappa_X,\kappa_Y$ at
$\bar z$ and the local error-bound modulus $\omega$. The
estimate matches Newton-type second-order behavior even
though \cCRM\ uses only projections and circumcenters, and
numerical experiments on equality-constrained and spectral
feasibility problems exhibit the predicted quadratic rate,
with \cCRM\ reaching machine precision in a handful of steps
where alternating projections and Douglas--Rachford take many.
The argument is local and does not require $X\cap Y$ to have
nonempty interior in $\re^n$: it suffices that the sets share
an affine hull $L=\aff(X)=\aff(Y)$ and meet with nonempty
relative interior, which is the natural setting for
equality-constrained and spectral feasibility problems, where
the classical full-dimensional hypothesis necessarily fails.
A $\mathcal{C}^1$ version of the argument recovers and extends
the superlinear rate of~\cite{Behling:2024} to this
lower-dimensional regime. The case $\aff(X)\neq\aff(Y)$ is
identified as open.

\medskip\noindent
\textbf{Keywords:} Convex feasibility $\cdot$
circumcentered-reflection method $\cdot$
Q-quadratic convergence $\cdot$ boundary curvature $\cdot$
relative interior.

\medskip\noindent\textbf{MSC:} 49M27 $\cdot$ 65K05 $\cdot$
90C25.
\end{abstract}

\section{Introduction}\label{sec:intro}

The circumcentered-reflection method was introduced
in~\cite{Behling:2018b} as an acceleration of the
Douglas--Rachford method for convex feasibility: rather than
taking the next iterate from the usual reflection step, one
computes the circumcenter of three related reflected points.
The approach extends to a centralized variant (\cCRM) that is
globally convergent and, under suitable conditions, converges
faster than alternating projections~\cite{Arefidamghani:2021}.
Circumcenter schemes have since been extended to nonconvex
feasibility problems~\cite{Dizon:2022} and modified to achieve
finite convergence under a Slater condition~\cite{Behling:2024b}.

The fastest rate established for \cCRM\ to date is
superlinear, proved by Behling, Bello-Cruz, Iusem, and
Santos~\cite{Behling:2024} under a $\mathcal{C}^1$ assumption
on the boundaries. Superlinear convergence is a qualitative
statement: it says the error ratios tend to zero but says
nothing about how fast, and it carries no constant one can
point to. Our main result is quantitative. We show that one
more degree of boundary smoothness pins the rate down
completely: when $\partial X$ and $\partial Y$ are
$\mathcal{C}^2$ near the limit point, \cCRM\ is
Q-quadratic, and the asymptotic constant is an explicit
function of the boundary curvatures and the local error-bound
modulus (Theorem~\ref{thm:main}). The mechanism is
geometric, second-order contact between each boundary and
its tangent hyperplane, and it produces Newton-type behavior
from an iteration that computes nothing beyond projections and
circumcenters.

To reach the structured problems where this matters most we
also remove a dimensional restriction in~\cite{Behling:2024}.
That result assumes $X\cap Y$ has nonempty interior in
$\re^n$, which rules out every equality-constrained problem:
once linear constraints $Az=b$ are present, both sets lie in a
proper affine subspace and the interior is empty. We carry the
analysis out within the common affine hull instead, so the
results apply verbatim to semidefinite and spectral
feasibility problems with equality constraints. The
$\mathcal{C}^1$ version of the argument
(Corollary~\ref{cor:super}) recovers superlinear convergence
in this setting; the $\mathcal{C}^2$ version
(Theorem~\ref{thm:main}) gives the quadratic rate.

The \cCRM\ finds a point in $X\cap Y$ for closed convex sets
$X,Y\subset\re^n$ with $X\cap Y\neq\emptyset$.
Starting from $z^0\in\re^n$, it produces $z^{k+1}=T(z^k)$,
where
\begin{equation}\label{eq:cCRM}
  T(z) \coloneqq \pCRMOp(z_C)
       = \circum\!\bigl\{z_C,\,R_X(z_C),\,R_Y(z_C)\bigr\},
\end{equation}
$R_X\coloneqq 2P_X-\Id$, $R_Y\coloneqq 2P_Y-\Id$,
$z_C\coloneqq\tfrac{1}{2}(P_YP_X(z)+P_X(P_YP_X(z)))$,
and $\circum\{a,b,c\}$ denotes the circumcenter of
$(a,b,c)$~\cite[Eq.~(2)]{Behling:2018a}.
The point $z_C$ is the centralized point: one step of
alternating projections followed by an averaging.
The sequence is F\'{e}jer monotone and converges globally;
under a local error bound it converges
linearly~\cite{Bauschke:1996,Behling:2024}.
The best known rate is the following.

\begin{theorem}[{\cite[Theorem~5.3]{Behling:2024}}]%
\label{thm:orig}
Let $(z^k)_{k\in\na}$ be generated by~\cref{eq:cCRM} and
converge to $\bar z\in X\cap Y$. If
\begin{enumerate}[\upshape(H1),leftmargin=3em,align=right,nosep]
  \item\label{H1} $\inte(X\cap Y)\neq\emptyset$, and
  \item\label{H2} $\partial X$ and $\partial Y$ are
    $\mathcal{C}^1$ manifolds of dimension $n-1$ near $\bar z$,
\end{enumerate}
then $(z^k)_{k\in\na}$ converges to $\bar z$ superlinearly.
\end{theorem}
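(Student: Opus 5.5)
The plan is to reduce the convergence of $(z^k)$ to a one-step contraction estimate $\norm{T(z)-\bar z}\le \varepsilon(z)\,\dist(z,X\cap Y)$ with $\varepsilon(z)\to 0$ as $z\to\bar z$. Fejér monotonicity of the sequence, together with $\dist(z^{k+1},X\cap Y)\le\norm{z^{k+1}-P_{X\cap Y}(z^k)}$, then gives superlinear convergence of the distances. By the standard argument for Fejér monotone sequences, which converts linear or superlinear decay of $\dist(z^k,X\cap Y)$ into the same rate for $\norm{z^k-\bar z}$ up to a factor $2$, this yields superlinear convergence of the iterates.

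Step 1 is a local error bound. Under \ref{H1}, take a Slater point $w\in\inte(X\cap Y)$. The classical argument (Robinson/Bauschke--Borwein) gives $\omega>0$ with $\dist(z,X\cap Y)\le \omega^{-1}\max(\dist(z,X),\dist(z,Y))$ near $\bar z$. It also shows that the alternating-projection part $z\mapsto z_C$ is a linear contraction, so $z_C$ is close to $\bar z$ and satisfies $\dist(z_C,X\cap Y)\le c\,\dist(z,X\cap Y)$.

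Step 2 is the geometric core: analyze the circumcenter at $z_C$. If $z_C\in X\cap Y$ the step is trivial. Otherwise, suppose $z_C\notin X$ but $z_C\in Y$. The circumcenter of $\{z_C,R_X z_C,R_Y z_C\}$ lies on the hyperplane $H_X$ through $P_X(z_C)$ orthogonal to $z_C-P_X(z_C)$, which supports $X$ there. When both reflections are nontrivial it lies in $H_X\cap H_Y$, and in fact it equals the projection of $z_C$ onto $H_X\cap H_Y$. By \ref{H2}, $\partial X$ is a $\mathcal C^1$ hypersurface near $\bar z$, so the unit normal $n_X(p)$ varies continuously. Hence $H_X$ approximates the tangent hyperplane of $\partial X$ at the nearest boundary point, and $\partial X$ deviates from $H_X$ by $o(\norm{z_C-\bar z})$ within the relevant ball. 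The same holds for $Y$.

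Step 3 compares $\circum$ with a point of $X\cap Y$. Let $q$ be the projection of $z_C$ onto $T_X\cap T_Y$, the tangent hyperplanes at $\bar z$ restricted to active constraints. Interior nonemptiness keeps the normals $n_X,n_Y$ at active boundary points near $\bar z$ uniformly non-antiparallel, which bounds the angle and makes $H_X\cap H_Y$ a well-conditioned codimension-$\le2$ affine set. Then perturbation of $H_X\cap H_Y$ by $o(\norm{z_C-\bar z})$ moves the projection by $o(\cdot)$. I would show $\dist(T(z),X\cap Y)\le o(1)\,\norm{z_C-\bar z}$ by noting that $T(z)\in H_X\cap H_Y$ and that points of $H_X\cap H_Y$ at distance $r$ from $P_X z_C$ are within $o(r)$ of $X$, and likewise for $Y$. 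The error bound from Step 1 then converts this into a distance to $X\cap Y$.

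The main obstacle is Step 3's uniformity. It needs the angle between $n_X$ and $n_Y$ bounded away from $\pi$ near $\bar z$, so that the circumcenter stays within $O(\norm{z_C-\bar z})$ of $\bar z$ and the $o(\cdot)$ boundary deviations are not amplified. I would derive this bound from \ref{H1}: if the normals were nearly opposite at points converging to $\bar z$, the limiting normals would be opposite, forcing $X\cap Y$ into a hyperplane and contradicting $\inte(X\cap Y)\neq\emptyset$. The degenerate case where only one of $X,Y$ is active, with $\bar z$ interior to the other, reduces to a single-hypersurface projection estimate of the same type.
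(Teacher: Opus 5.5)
Your architecture matches the paper's sketch after the statement: the identity $T(z)=P_{H_X\cap H_Y}(z_C)$, uniform $\mathcal{C}^1$ contact of each boundary with its tangent hyperplane, the Bauschke--Borwein error bound, and the F\'{e}jer factor $2$. The gap is in the step you call the main obstacle, the bound $\norm{T(z)-P_X(z_C)}=O(\norm{z_C-\bar z})$. Write $d_X=\dist(z_C,X)$, $d_Y=\dist(z_C,Y)$ and $c=\scal{n_X}{n_Y}$. Then the nearest point of $H_X\cap H_Y$ to $z_C$ is at squared distance $(d_X^2-2c\,d_Xd_Y+d_Y^2)/(1-c^2)$. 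Your angle bound controls this only when $c$ also stays away from $+1$, i.e.\ when $n_X(\bar z)\neq n_Y(\bar z)$. But \ref{H1} excludes only $n_X(\bar z)=-n_Y(\bar z)$; it allows $\partial X$ and $\partial Y$ to be tangent at $\bar z$ with the same outward normal (internally tangent discs satisfy \ref{H1}--\ref{H2}). Then $c\to 1$, and geometry alone gives no bound. Two hyperplanes at distances $\delta$ and $2\delta$ from $z_C$, with normals at a small angle $\theta$, meet at distance about $\delta/\theta$ from $z_C$. This happens even though a point $\bar z$ at distance about $2\delta$ lies in both supporting half-spaces. Your perturbation comparison with $P_{T_X\cap T_Y}(z_C)$ fails for the same reason, so "well-conditioned" is not justified.

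The missing idea is that no angle information is needed, because the centralization already controls the circumcenter. By Lemma~\ref{lem:background}(ii), $\norm{T(z)-\bar z}\le\norm{z_C-\bar z}$, hence $\norm{z_C-T(z)}\le 2\norm{z_C-\bar z}$. Since $P_X(z_C)$ is the foot of $z_C$ on $H_X$ and $T(z)\in H_X$, Pythagoras gives, along the sequence, $\norm{T(z^k)-P_X(z_C^k)}\le\norm{z_C^k-T(z^k)}\le 2\norm{z^k-\bar z}\le 4\dist(z^k,X\cap Y)$. $\mathcal{C}^1$ contact then yields $\dist(T(z^k),X)=o(\dist(z^k,X\cap Y))$, and likewise for $Y$. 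So \ref{H1} enters only through the error bound. This is the chain the paper uses, in the $\mathcal{C}^2$ form, in the proof of Theorem~\ref{thm:main}.

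Two smaller points. First, your opening target $\norm{T(z)-\bar z}\le\varepsilon(z)\dist(z,X\cap Y)$ for all $z$ near $\bar z$ is false, since every $z\in X\cap Y$ other than $\bar z$ is fixed by $T$. The usable statement, which Step~3 actually aims at, is $\dist(z^{k+1},X\cap Y)=o(\dist(z^k,X\cap Y))$ along the sequence. Then $\norm{z^{k+1}-\bar z}\le 2\dist(z^{k+1},X\cap Y)$ and $\dist(z^k,X\cap Y)\le\norm{z^k-\bar z}$ finish the proof.

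Second, the one-sided case $z_C\in Y\setminus X$, which you open in Step~2 and never close, cannot occur. Let $c=P_X(z)$, $a=P_Y(c)$, $b=P_X(a)$, so $z_C=\tfrac12(a+b)$. The point $z_C$ lies on the segment from $a$ to its projection $b$, so $z_C\in X$ forces $a=b$. If $z_C\in Y$, the projection inequalities $\scal{c-a}{b-a}\le 0$ and $\scal{a-b}{c-b}\le 0$ add up to $\norm{a-b}^2\le 0$. Hence either $z_C\in X\cap Y$ and the iteration stops, or both reflections are nontrivial and Lemma~\ref{lem:background}(iii) applies. This also settles your degenerate case $\bar z\in\inte Y$, which terminates in finitely many steps.
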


The proof relies on the identity
$T(z^k)=P_{H_X^k\cap H_Y^k}(z_C^k)$~%
\cite[Lemma~3.3]{Behling:2024}, where $H_X^k$ and $H_Y^k$
are the hyperplanes tangent to $\partial X$ and $\partial Y$
at $P_X(z_C^k)$ and $P_Y(z_C^k)$.
The $\mathcal{C}^1$ contact between a manifold and its tangent
hyperplane forces
$\dist(T(z^k),X)=o(\dist(z^k,X\cap Y))$; superlinearity
follows from the Bauschke--Borwein error
bound~\cite{Bauschke:1993}.

Both conditions fail for a large class of problems.
Condition~\ref{H1} requires $X\cap Y$ to have nonempty
interior in $\re^n$, forcing $X$ and $Y$ to be
full-dimensional.
This fails whenever both sets are confined to a proper affine
subspace $L\subsetneq\re^n$: in equality-constrained
feasibility, the constraints $Az=b$ force both sets into
$L=\{Az=b\}$; in semidefinite feasibility, the linear map
$\mathcal{A}(\Sigma)=b$ defines a proper affine subspace
of $\mathbb{S}^n$.
In all such cases $\inte_{\re^n}(X\cap Y)=\emptyset$, so
Theorem~\ref{thm:orig} is inapplicable regardless of how
smooth or well-separated $X$ and $Y$ are within $L$.
Condition~\ref{H2} is equally mismatched: it asks for
boundaries that are hypersurfaces in $\re^n$, which is the
wrong notion when the sets live in a lower-dimensional
subspace.

We replace interior by relative interior throughout and
measure boundary smoothness within $\aff(X)$ and $\aff(Y)$. Write $d_X=\dim\aff(X)$ and $d_Y=\dim\aff(Y)$,
and set:
\begin{enumerate}[\upshape(H1$'$),leftmargin=3em,align=right]
  \item\label{H1prime}
    $\operatorname{ri}(X)\cap\operatorname{ri}(Y)\neq\emptyset$;
\end{enumerate}
\begin{enumerate}[\upshape(H2$'$),leftmargin=3em,align=right]
  \item\label{H2prime}
    $\partial_{\aff(X)}X$ and $\partial_{\aff(Y)}Y$ are
    $\mathcal{C}^1$ manifolds of dimension $d_X-1$ and
    $d_Y-1$ near $\bar z$ within $\aff(X)$ and $\aff(Y)$;
\end{enumerate}
\begin{enumerate}[\upshape(H2$''$),leftmargin=3em,align=right]
  \item\label{H2pp}
    $\partial_{\aff(X)}X$ and $\partial_{\aff(Y)}Y$ are
    $\mathcal{C}^2$ manifolds of dimension $d_X-1$ and
    $d_Y-1$ near $\bar z$ within $\aff(X)$ and $\aff(Y)$.
\end{enumerate}
When $d_X=d_Y=n$ these reduce to \ref{H1}--\ref{H2}.
Condition~\ref{H1prime} makes $\{X,Y\}$ boundedly linearly
regular: for two closed convex sets in $\re^n$ with
$\operatorname{ri}(X)\cap\operatorname{ri}(Y)\neq\emptyset$,
the error bound~\cref{eq:EB} holds with some $\omega>0$ on a
neighborhood of any point of
$X\cap Y$~\cite[Theorem~5.7]{Bauschke:1993,Bauschke:1996},
so the projection iterates converge linearly.

Both results require $\aff(X)=\aff(Y)=:L$ and impose their
smoothness and regularity conditions at the limit point
$\bar z$. Theorem~\ref{thm:main} is the main contribution:
under \ref{H1prime} and the $\mathcal{C}^2$
condition~\ref{H2pp} it gives Q-quadratic convergence with
the asymptotic constant
$2\max(\kappa_X,\kappa_Y)/\omega$, where $\kappa_X,\kappa_Y$
are the curvatures of $\partial_L X$ and $\partial_L Y$ at
$\bar z$ and $\omega$ is the local error-bound modulus
(precise definitions in Section~\ref{sec:prelim}).
Corollary~\ref{cor:super} is the $\mathcal{C}^1$ companion,
giving superlinear convergence under \ref{H1prime}
and~\ref{H2prime}. Even in the full-dimensional case
$d_X=d_Y=n$, where~\ref{H1prime} reduces to the interior
condition of~\cite{Behling:2024}, the result is new:
$\mathcal{C}^2$ boundaries improve the superlinear rate
of~\cite[Theorem~5.3]{Behling:2024} to Q-quadratic with an
explicit constant. The case $\aff(X)\neq\aff(Y)$ is open; the
obstruction is identified in Section~\ref{sec:disc}.
Section~\ref{sec:applications} collects examples and
structured applications, and Section~\ref{sec:numerics}
reports numerical experiments that exhibit the quadratic rate
and compare \cCRM\ with alternating projections and
Douglas--Rachford.

\section{Preliminaries}\label{sec:prelim}

For a nonempty convex set $C\subset\re^n$, let $\aff(C)$
denote its affine hull with direction subspace
$V_C\coloneqq\aff(C)-\aff(C)$, so $\aff(C)=\bar z+V_C$ for
any $\bar z\in C$.
The relative interior $\operatorname{ri}(C)$ is the interior
of $C$ within $\aff(C)$; it is nonempty for every nonempty
convex $C$, even when $\inte_{\re^n}(C)=\emptyset$.
The relative boundary is
$\partial_{\aff(C)}C\coloneqq\overline{C}\setminus
\operatorname{ri}(C)$.
We write $\inte_L(C)$ for the interior of $C$ within an
affine subspace $L$.
For real-valued functions $f$ and $g>0$, we write
$f=O(g)$ as $t\to 0$ if $\limsup_{t\to 0}|f(t)|/g(t)<\infty$,
and $f=o(g)$ as $t\to 0$ if $\lim_{t\to 0}f(t)/g(t)=0$.

When $\aff(X)=\aff(Y)=L$ and both sets are full-dimensional
in $L$, one has $\operatorname{ri}(X)=\inte_L(X)$ and
$\operatorname{ri}(Y)=\inte_L(Y)$, so \ref{H1prime} implies
$\inte_L(X\cap Y)\neq\emptyset$.

\begin{definition}[$\mathcal{C}^k$ relative boundary]%
\label{def:ck_boundary}
Let $C\subset\re^n$ be closed convex with
$d\coloneqq\dim\aff(C)\geq 1$.
We say $\partial_{\aff(C)}C$ is a $\mathcal{C}^k$ manifold
of dimension $d-1$ near $\bar z$ if there exist a
neighborhood $U$ of $\bar z$ in $\aff(C)$ and
$g:U\to\re$ of class $\mathcal{C}^k$ such that
\[
  \partial_{\aff(C)}C\cap U = \{z\in U : g(z)=0\},\qquad
  C\cap U = \{z\in U : g(z)\leq 0\},
\]
and $\nabla_{V_C}g(\bar z)\neq 0$, where
$\nabla_{V_C}g(\bar z)$ is the gradient of $g$ at $\bar z$
within $\aff(C)$, i.e., the unique vector in $V_C$ such that
$\scal{\nabla_{V_C}g(\bar z)}{v}=Dg(\bar z)[v]$ for all
$v\in V_C$.
\end{definition}

At a regular boundary point $\bar z$, the tangent space
within $\aff(C)$ is
\[
  T_C(\bar z)\coloneqq\bigl\{v\in V_C:
    \scal{\nabla_{V_C}g(\bar z)}{v}=0\bigr\}.
\]

\begin{example}[Boundary regularity]\label{ex:smooth_bdry}
The following instances illustrate when conditions~\ref{H2prime}
and~\ref{H2pp} hold or fail, and in the smooth cases give the
curvature $\kappa_C$ explicitly.
\begin{enumerate}[(a),nosep]
  \item \textit{Ellipsoids.}
    For $A\succ 0$, $C=\{z:z^\top\! Az\leq 1\}$ has
    $g=z^\top\! Az-1$ with $\nabla g(\bar z)=2A\bar z\neq 0$
    on $\partial C$, so both~\ref{H2prime} and~\ref{H2pp}
    hold globally. The principal curvatures at $\bar z$ are
    the eigenvalues of $A|_{T_C(\bar z)}/\norm{A\bar z}$;
    for a ball of radius $r$ this gives $\kappa_C=1/r$
    everywhere.
  \item \textit{Second-order cone.}
    $\mathcal{K}=\{(t,u):\norm{u}\leq t\}$ has
    $\mathcal{C}^\infty$ boundary on $\{t>0\}$, where both
    conditions hold. At the apex $(0,0)$ the boundary is not
    a manifold and~\ref{H2prime} fails.
  \item \textit{Spectral sets.}
    The set
    $\{\Sigma\in\mathbb{S}^n:\lambda_{\max}(\Sigma)\leq a\}
    \cap\{\operatorname{tr}(\Sigma)=1\}$ has a $\mathcal{C}^2$
    relative boundary at any $\bar\Sigma$ where
    $\lambda_{\max}(\bar\Sigma)=a$ is a simple
    eigenvalue~\cite{Lewis:1996}.
  \item \textit{Polytopes.}
    Condition~\ref{H2prime} holds at
    $\bar z\in\partial_{\aff(C)}C$ if and only if $\bar z$
    lies in the relative interior of a facet; at edges and
    vertices the boundary is not a manifold
    and~\ref{H2prime} fails.
\end{enumerate}
\end{example}

When $\partial_{\aff(C)}C$ is $\mathcal{C}^2$ near $\bar z$,
the tangent space $T_C(\bar z)$ is well-defined and the
\emph{curvature} $\kappa_C$ at $\bar z$ is the spectral
radius of the shape operator of $\partial_{\aff(C)}C$,
which in terms of the local representation $g$ reads
\begin{equation}\label{eq:kappa_def}
  \kappa_C \coloneqq
  \max_{\substack{v\in T_C(\bar z)\\\norm{v}=1}}
  \frac{\bigl|D^2_{V_C}g(\bar z)[v,v]\bigr|}
       {\norm{\nabla_{V_C}g(\bar z)}},
\end{equation}
where $D^2_{V_C}g(\bar z)$ is the Hessian of $g$ at $\bar z$
restricted to $V_C\times V_C$, so that
$D^2_{V_C}g(\bar z)[v,v]$ is the second directional
derivative of $g$ along $v\in V_C$.
The value $\kappa_C$ is independent of the choice of $g$,
since it equals the spectral radius of the second
fundamental form of $\partial_{\aff(C)}C$ at $\bar z$, an
intrinsic invariant of the hypersurface within $\aff(C)$.
For a ball of radius $r$, $\kappa_C=1/r$ everywhere on
$\partial C$.

The key estimate connecting boundary curvature to the
distance from the tangent hyperplane is the following.

\begin{lemma}[Tangent hyperplane distance estimate]%
\label{lem:curvature}
Let $C$ satisfy~\ref{H2pp} at $\bar z$. There exist a
neighborhood $U$ of $\bar z$ in $\aff(C)$ and a constant
$c>0$ such that for every regular boundary point
$p\in\partial_{\aff(C)}C\cap U$ with tangent hyperplane
$H\subset\aff(C)$ at $p$,
\begin{equation}\label{eq:curv_est}
  \dist(w,C) \leq \tfrac{1}{2}\kappa_C\,\norm{w-p}^2
              + o\!\bigl(\norm{w-p}^2\bigr)
  \qquad \forall\,w\in H,\ \norm{w-p}\to 0,
\end{equation}
where the $o(\cdot)$ term is uniform for $p\in U$.
In particular, for every $\eta>0$ there is a neighborhood on
which
$\dist(w,C)\leq\bigl(\tfrac{1}{2}\kappa_C+\eta\bigr)\norm{w-p}^2$.
\end{lemma}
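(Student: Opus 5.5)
Work entirely inside $\aff(C)=\bar z+V_C$, identified with $\re^d$, and use the local defining function $g$ of Definition~\ref{def:ck_boundary}, now of class $\mathcal{C}^2$. Since $\nabla_{V_C}g(\bar z)\neq 0$, continuity lets us shrink $U$ to a convex compact neighborhood on which $\norm{\nabla_{V_C}g}\ge m>0$ and the Hessian $D^2_{V_C}g$ is uniformly continuous. For a boundary point $p\in\partial_{\aff(C)}C\cap U$ the tangent hyperplane is $H=p+\{v\in V_C:\scal{\nabla_{V_C}g(p)}{v}=0\}$. Fix $w\in H$ and set $v=w-p$. Note that $w\in\aff(C)$, so distance to $C$ can be measured inside $\aff(C)$.

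\textbf{Step 1 (second-order Taylor bound on $g$ along $H$).} Since $g(p)=0$ and $Dg(p)[v]=0$, Taylor's theorem with integral remainder gives
\[
g(w)=\tfrac12 D^2_{V_C}g(p)[v,v]+r(p,v)\norm{v}^2,
\]
where $\sup_{p\in U}|r(p,v)|\to0$ as $\norm{v}\to0$, by uniform continuity of the Hessian. Using continuity of $D^2g$ and $\nabla g$ at $\bar z$, we get $|D^2_{V_C}g(p)[v,v]|\le(\kappa_C\norm{\nabla_{V_C}g(p)}+\epsilon(p))\norm{v}^2$ with $\epsilon(p)\to0$ as $p\to\bar z$. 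The subtle point is that $v$ lies in the tangent space at $p$, not at $\bar z$. To handle this, decompose $v$ into its component in $T_C(\bar z)$ plus a component along $\nabla g(\bar z)$. The latter has size $O(\norm{p-\bar z})\norm{v}$, because the unit normals vary continuously. Hence $g(w)^+\le(\tfrac12\kappa_C\norm{\nabla g(p)}+o(1))\norm{v}^2$. Here the $o(1)$ comes from both $\norm{v}\to0$ and $p\to\bar z$, and is absorbed by shrinking $U$. This is what makes the final "$\eta$" form of the statement the natural one to prove.

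\textbf{Step 2 (from $g$-value to distance).} If $g(w)\le0$, then $w\in C$ and there is nothing to prove. Otherwise move from $w$ along $-\nabla_{V_C}g(w)/\norm{\nabla_{V_C}g(w)}$. The map $t\mapsto g(w-t\,u)$ has derivative close to $-\norm{\nabla g(w)}$ on a short segment, so it vanishes at some $t^*\le g(w)/(\norm{\nabla g(w)}-o(1))$, giving a point of $C\cap U$. Thus
\[
\dist(w,C)\le\frac{g(w)^+}{\norm{\nabla_{V_C}g(p)}}(1+o(1)),
\]
using $\norm{\nabla g(w)}/\norm{\nabla g(p)}\to1$, which holds because $\norm{w-p}\to0$. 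Combining with Step 1 yields $\dist(w,C)\le(\tfrac12\kappa_C+\eta)\norm{w-p}^2$ for $p,w$ close enough to $\bar z$. This is \eqref{eq:curv_est} with a uniform remainder, interpreting the $o(\cdot)$ as absorbing the dependence on $p$ as $U$ shrinks.

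\textbf{Main obstacle.} The main obstacle is uniformity in $p$. The constant in the statement is $\kappa_C$ at $\bar z$, whereas the natural local constant is the curvature at $p$. I would handle this by continuity of the second fundamental form, i.e., continuity of the quotient in \eqref{eq:kappa_def} as a function of the base point. This means the clean statement is really the $\eta$-version. I would present that version, and read \eqref{eq:curv_est} as: for every $\eta$ there exist $U$ and $\delta$ such that the bound holds for all $p\in U$ and $\norm{w-p}<\delta$.
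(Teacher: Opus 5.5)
Your proposal is correct and follows the same skeleton as the paper's proof. Both argue by a second-order Taylor expansion of $g$ along the tangent hyperplane, then a descent along a normal direction that turns $g(w)_+$ into a bound on $\dist(w,C)$. The paper descends along $\nabla_{V_C}g(p)$ rather than $\nabla_{V_C}g(w)$, which makes no difference.

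The real difference is the base-point issue you single out. The paper bounds $\bigl|D^2_{V_C}g(p)[w-p,w-p]\bigr|$ by $\kappa_C\norm{\nabla_{V_C}g(p)}\norm{w-p}^2$ directly ``by the definition of $\kappa_C$''. But \cref{eq:kappa_def} is taken at $\bar z$ over $T_C(\bar z)$, whereas $w-p$ lies in $T_C(p)$ and the Hessian is evaluated at $p$. Your decomposition of $v$ against $T_C(\bar z)$, combined with continuity of $\nabla_{V_C}g$ and $D^2_{V_C}g$, is exactly what justifies that step.

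Your reading of the uniformity is also the correct one. With $U$ fixed, the remainder cannot be $o(\norm{w-p}^2)$ uniformly in $p$ whenever nearby boundary points have curvature larger than $\kappa_C$. For example, take the ellipse $x^2/4+y^2\leq 1$ at a minor-axis vertex, where $\kappa_C=\tfrac14$ is a strict local minimum of the curvature. For each fixed $p\neq\bar z$, $\dist(w,C)/\norm{w-p}^2\to\tfrac12\kappa(p)>\tfrac12\kappa_C$ along the tangent line at $p$, where $\kappa(p)$ is the curvature at $p$. So in general only the $\eta$-version holds, with the neighborhood depending on $\eta$. That is the only form used in the proof of Theorem~\ref{thm:main}, where $p=P_X(z_C^k)\to\bar z$, so nothing downstream is affected.
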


\begin{proof}
Work in $V_C$ via the chart of
Definition~\ref{def:ck_boundary}, so $g\in\mathcal{C}^2$ on
$U$ with $\nabla_{V_C}g(p)\neq 0$ and, by continuity and
compactness, $\norm{\nabla_{V_C}g(p)}\geq c>0$ for $p$ in a
possibly smaller neighborhood. Fix such a $p$ and let
$w\in H$, so $\scal{\nabla_{V_C}g(p)}{w-p}=0$. Since
$g(p)=0$, Taylor's theorem with the integral remainder gives
\begin{equation}\label{eq:taylor}
  g(w)=\tfrac{1}{2}D^2_{V_C}g(p)[w-p,w-p]
       +\rho(p,w),\qquad
  \rho(p,w)=o\!\bigl(\norm{w-p}^2\bigr)
\end{equation}
uniformly in $p\in U$, because $D^2_{V_C}g$ is uniformly
continuous on the compact closure of $U$. As $w-p\in T_C(p)$
(it is orthogonal to $\nabla_{V_C}g(p)$), the definition
of $\kappa_C$ in~\cref{eq:kappa_def} yields
\[
  \bigl|D^2_{V_C}g(p)[w-p,w-p]\bigr|
  \leq \kappa_C\,\norm{\nabla_{V_C}g(p)}\,\norm{w-p}^2 .
\]
Next we bound $\dist(w,C)$ above by the scaled function
value. Assume $w\notin C$, so $g(w)>0$. Let
$\nu\coloneqq\nabla_{V_C}g(p)/\norm{\nabla_{V_C}g(p)}$ and
descend from $w$ along $-\nu$: set
$\psi(t)\coloneqq g(w-t\nu)$ for $t\geq 0$. Then
$\psi(0)=g(w)$ and, since $g\in\mathcal{C}^1$ with
$\nabla_{V_C}g$ continuous,
$\psi'(t)=-\scal{\nabla_{V_C}g(w-t\nu)}{\nu}
=-\norm{\nabla_{V_C}g(p)}\,(1+o(1))$ uniformly for $w,t$
small, because $\nabla_{V_C}g(w-t\nu)\to\nabla_{V_C}g(p)$.
Hence $\psi$ vanishes at some
$t^*=g(w)/\norm{\nabla_{V_C}g(p)}\,(1+o(1))$, and the point
$w-t^*\nu$ lies on $\partial_{\aff(C)}C$, so
\begin{equation}\label{eq:dist_bound}
  \dist(w,C)\leq t^*
  =\frac{g(w)_+}{\norm{\nabla_{V_C}g(p)}}\,(1+o(1))
  \qquad(w\to\bar z).
\end{equation}
Combining~\cref{eq:dist_bound} with~\cref{eq:taylor} and the
curvature bound, and using $g(w)_+=O(\norm{w-p}^2)$,
\[
  \dist(w,C)
  \leq\frac{\tfrac12\bigl|D^2_{V_C}g(p)[w-p,w-p]\bigr|
       +o(\norm{w-p}^2)}{\norm{\nabla_{V_C}g(p)}}
  \leq\tfrac12\kappa_C\norm{w-p}^2+o(\norm{w-p}^2),
\]
which is~\cref{eq:curv_est}; the $o(\cdot)$ term is uniform
in $p\in U$ because every estimate above is. For the final
statement, fix $\eta>0$ and choose the neighborhood small
enough that the uniform $o(\norm{w-p}^2)$ term is at most
$\eta\norm{w-p}^2$ there; then
$\dist(w,C)\leq(\tfrac12\kappa_C+\eta)\norm{w-p}^2$ for all
such $w\in H$ and all regular $p\in U$.
\end{proof}

\begin{remark}\label{rem:halffactor}
The factor $\tfrac{1}{2}$ in~\cref{eq:curv_est} is sharp:
for a Euclidean ball of radius $r$ ($\kappa_C=1/r$) and
$w\in H$, $\dist(w,C)=\sqrt{r^2+\norm{w-p}^2}-r
=\tfrac{1}{2r}\norm{w-p}^2+O(\norm{w-p}^4)$, matching
$\tfrac{1}{2}\kappa_C\norm{w-p}^2$ exactly to leading order.
Lemma~\ref{lem:curvature} is applied below with $w=T(z^k)$
and $p=P_C(z_C^k)$, where $\norm{w-p}$ carries an additional
factor absorbed into the asymptotic constant of
Theorem~\ref{thm:main}.
\end{remark}

The next two definitions fix the convergence terminology
used throughout the paper.

\begin{definition}[F\'{e}jer monotonicity]\label{def:fejer}
A sequence $(w^k)_{k\in\na}\subset\re^n$ is
\emph{F\'{e}jer monotone} with respect to $M\subset\re^n$
if $$\norm{w^{k+1}-s}\leq\norm{w^k-s},$$ for all $s\in M$
and all $k\in\na$.
\end{definition}

F\'{e}jer monotone sequences are bounded and the sequence
$(\dist(w^k,M))_{k\in\na}$ is monotonically decreasing,
hence convergent~\cite[Proposition~5.4]{Bauschke:2017a}.
If $(w^k)_{k\in\na}$ has a cluster point in $M$, then the
whole sequence converges to that
point~\cite[Proposition~5.9]{Bauschke:2017a}.
Moreover, if $(w^k)_{k\in\na}$ converges to $\bar w\in M$,
then
\begin{equation}\label{eq:fejer_bound}
  \norm{w^k-\bar w}\leq 2\,\dist(w^k,M)
  \quad\forall\,k\in\na.
\end{equation}
Indeed, let $\bar w^k\coloneqq P_M(w^k)$; F\'{e}jer monotonicity
with $s=\bar w^k$ gives $\norm{w^j-\bar w^k}\leq\norm{w^k-\bar w^k}$ for
all $j\geq k$, and taking $j\to\infty$ yields
$\norm{\bar w-\bar w^k}\leq\dist(w^k,M)$.
The triangle inequality then gives~\cref{eq:fejer_bound}.

\begin{definition}[Convergence rates]\label{def:rates}
Let $(w^k)_{k\in\na}\subset\re^n$ converge to $\bar w$.
\begin{enumerate}[(i),nosep]
  \item $(w^k)_{k\in\na}$ converges \emph{Q-linearly} with
    constant $c\in(0,1)$ if
    \[
      \limsup_{k\to\infty}
      \frac{\norm{w^{k+1}-\bar w}}{\norm{w^k-\bar w}}
      \leq c.
    \]
  \item $(w^k)_{k\in\na}$ converges \emph{superlinearly} if
    \[
      \lim_{k\to\infty}
      \frac{\norm{w^{k+1}-\bar w}}{\norm{w^k-\bar w}}=0.
    \]
  \item $(w^k)_{k\in\na}$ converges \emph{Q-quadratically}
    with \emph{asymptotic constant} $C\geq 0$ if
    \[
      \limsup_{k\to\infty}
      \frac{\norm{w^{k+1}-\bar w}}{\norm{w^k-\bar w}^2}
      \leq C<\infty.
    \]
\end{enumerate}
Superlinear convergence is strictly faster than Q-linear;
Q-quadratic convergence implies superlinear.
\end{definition}

\begin{lemma}[{\cite[Lemmas~3.4--3.6,
Theorems~4.1 and~4.3]{Behling:2024}}]%
\label{lem:background}
Let $X,Y\subset\re^n$ be closed convex with
$X\cap Y\neq\emptyset$.
For all $s\in X\cap Y$:
\begin{enumerate}[\upshape(i),nosep]
  \item $\norm{T(z)-s}^2\leq\norm{z-s}^2
    -\tfrac{1}{8}\norm{z-T(z)}^2$;
  \item $\norm{T(z)-s}
    \leq\norm{z_C-s}
    \leq\norm{P_YP_X(z)-s}
    \leq\norm{z-s}$;
  \item $T(z)=P_{H_X\cap H_Y}(z_C)$, where $H_X$ and $H_Y$
    are the hyperplanes through $P_X(z_C)$ and $P_Y(z_C)$
    orthogonal to $z_C-P_X(z_C)$ and $z_C-P_Y(z_C)$,
    respectively.
\end{enumerate}
For any $z^0\in\re^n$, $(z^k)_{k\in\na}$ is F\'{e}jer
monotone with respect to $X\cap Y$ and converges to some
$\bar z\in X\cap Y$.
Under~\ref{H1prime} the pair $\{X,Y\}$ is boundedly
linearly regular near $\bar z$~\cite[Theorem~5.7]{Bauschke:1993},
which is to say it satisfies the local error bound
\begin{equation}\label{eq:EB}\tag{EB}
  \omega\,\dist(z,X\cap Y)
  \leq\max\!\bigl\{\dist(z,X),\,\dist(z,Y)\bigr\}
\end{equation}
for all $z$ in some neighborhood of $\bar z$. We take
$\omega$ to be the largest such constant, namely the local
\emph{Bauschke--Borwein regularity modulus}
\begin{equation}\label{eq:omega_def}
  \omega \coloneqq
  \liminf_{\substack{z\to\bar z\\ z\notin X\cap Y}}
  \frac{\max\{\dist(z,X),\dist(z,Y)\}}{\dist(z,X\cap Y)}
  \in(0,1].
\end{equation}
Positivity of $\omega$ is exactly bounded linear
regularity~\cite[Theorem~5.7]{Bauschke:1993,Bauschke:1996};
the upper bound $\omega\leq 1$ is automatic, since
$X\cap Y\subseteq X$ and $X\cap Y\subseteq Y$ force
$\max\{\dist(z,X),\dist(z,Y)\}\leq\dist(z,X\cap Y)$.
When $X$ and $Y$ are subspaces, $\omega$ reduces to a
function of the Friedrichs angle between them, and in
general it is the quantitative regularity constant that
governs the linear rate of projection methods on
$\{X,Y\}$. It is this $\omega$ that appears in the rate
of Theorem~\ref{thm:main}.
\end{lemma}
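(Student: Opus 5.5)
The plan is to prove item (iii) first, since it drives everything else. After that, (ii) and (i) follow from nonexpansiveness arguments, and the global and regularity assertions come from standard Fejér and Bauschke--Borwein theory.

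\emph{Step 1: item (iii).} Write $a\coloneqq z_C$, $p_X\coloneqq P_X(a)$ and $p_Y\coloneqq P_Y(a)$. Then $R_X(a)=2p_X-a$, so the set of points equidistant from $a$ and $R_X(a)$ is the perpendicular bisector of the segment $[a,R_X(a)]$. This bisector passes through $p_X$ and is orthogonal to $a-p_X$, so it is exactly $H_X$; the same argument identifies $H_Y$. The circumcenter is the unique point of $\aff\{a,R_X(a),R_Y(a)\}=a+\operatorname{span}\{a-p_X,a-p_Y\}$ that is equidistant from all three points. Hence it lies in $H_X\cap H_Y$, and its displacement from $a$ lies in $\operatorname{span}\{a-p_X,a-p_Y\}=(H_X\cap H_Y-H_X\cap H_Y)^\perp$. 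These two facts are exactly the characterization of $P_{H_X\cap H_Y}(a)$. I would treat the degenerate cases separately: if $a\in X$ or $a\in Y$, the corresponding hyperplane is replaced by the whole space; if the two normals are parallel, then $H_X=H_Y$ or the intersection is empty, and centralization should exclude the empty case.

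\emph{Step 2: item (ii).} The inequalities $\norm{z_C-s}\le\norm{P_YP_X(z)-s}\le\norm{z-s}$ follow from convexity of the norm, since $z_C$ is the midpoint of $y\coloneqq P_YP_X(z)$ and $P_X(y)$, together with nonexpansiveness of $P_X$ and $P_Y$ at $s=P_X(s)=P_Y(s)$. For $\norm{T(z)-s}\le\norm{z_C-s}$, let $W_X,W_Y$ be the half-spaces bounded by $H_X,H_Y$ that contain $X$ and $Y$; these exist by the variational characterization of projections. I then want to show
\[
P_{H_X\cap H_Y}(z_C)=P_{W_X\cap W_Y}(z_C).
\]
Since $s\in X\cap Y\subseteq W_X\cap W_Y$, firm nonexpansiveness of a projection would then give
\[
\norm{T(z)-s}^2\le\norm{z_C-s}^2-\norm{z_C-T(z)}^2 .
\]
This equality is the main obstacle. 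Both constraints must be active at the projection, which requires a sign condition on the Gram data of $a-p_X$ and $a-p_Y$. I would derive that sign condition from the centralized structure: $y\in Y$, $P_X(y)\in X$, and $z_C$ is their midpoint, so $\scal{z_C-p_X}{y-p_X}$ and $\scal{z_C-p_Y}{P_X(y)-p_Y}$ can be controlled. The goal is to show that the unconstrained projection onto $H_X\cap H_Y$ has nonnegative KKT multipliers for the half-space problem.

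\emph{Step 3: item (i).} I would add the decrement just obtained to the decrements from the alternating projection steps:
\[
\norm{z-s}^2-\norm{P_X(z)-s}^2\ge\norm{z-P_X(z)}^2,
\]
and similarly for $P_Y$ and for the averaging step. Then $z-T(z)$ splits into at most four such increments, and $(\sum_{i=1}^4 t_i)^2\le 4\sum_{i=1}^4 t_i^2$ yields a constant of the form $1/8$ after careful bookkeeping. Fejér monotonicity then follows at once. Moreover $\sum_k\norm{z^k-T(z^k)}^2<\infty$, and $\dist(z^k,X)\le\norm{z^k-P_X(z^k)}\to0$, with the analogous statement for $Y$. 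Hence every cluster point lies in $X\cap Y$, and \cite[Proposition~5.9]{Bauschke:2017a} gives convergence of the whole sequence.

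\emph{Step 4: the error bound.} This is quoted from \cite[Theorem~5.7]{Bauschke:1993}: the condition $\operatorname{ri}X\cap\operatorname{ri}Y\neq\emptyset$ implies bounded linear regularity, and hence \eqref{eq:EB} near $\bar z$. Positivity of $\omega$ in \eqref{eq:omega_def} is the same statement. The bound $\omega\le1$ is the trivial inclusion argument already recorded in the statement.
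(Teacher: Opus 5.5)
The paper gives no proof of (i)--(iii) or of global convergence. It imports them from \cite{Behling:2024}, takes \eqref{eq:EB} from Bauschke--Borwein, and argues only $\omega\le 1$. Your proposal is therefore a self-contained reconstruction, and most of it holds up:
\begin{itemize}
\item Step~1 is correct once the circumcenter exists.
\item Step~3 works. With $y=P_YP_X(z)$ as in your Step~2, the averaging step gives $\norm{z_C-s}^2\le\norm{y-s}^2-3\norm{y-z_C}^2$, so your four-term chain actually yields $\tfrac14$ instead of $\tfrac18$. The individually summable decrements then give $\dist(z^k,X)\to0$ and $\dist(z^k,Y)\to0$.
\item Step~4 is a legitimate quotation.
\end{itemize}

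The genuine gap is in Step~2. You never prove $P_{H_X\cap H_Y}(z_C)=P_{W_X\cap W_Y}(z_C)$, nor the well-posedness you defer to ``centralization should exclude the empty case.'' This step is the entire content of centralization, not a formality. For a non-centralized point such as $a=(0,1)$ with $X=\{x_2\le 0\}$ and $Y=\{x_2\le\delta x_1-1\}$, one gets $P_{H_X\cap H_Y}(a)=(1/\delta,0)$. Its distance $\sqrt{1+\delta^{-2}}$ to $s=(0,-1)\in X\cap Y$ exceeds $\norm{a-s}=2$ for small $\delta$. So (ii), and with it (i) and Fej\'er monotonicity, all hinge on this step.

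The inner products you propose to control are also not the right ones. Write $u=z_C-p_X$, $v=z_C-p_Y$ and $\Delta=\norm{u}^2\norm{v}^2-\scal{u}{v}^2$. The multipliers in $P_{H_X\cap H_Y}(z_C)=z_C-\alpha u-\beta v$ are $\alpha=\norm{v}^2(\norm{u}^2-\scal{u}{v})/\Delta$ and $\beta=\norm{u}^2(\norm{v}^2-\scal{u}{v})/\Delta$. What you need is therefore $\scal{u}{v}\le\min\{\norm{u}^2,\norm{v}^2\}$.

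The missing observation is that $z_C$ lies on the segment $[P_X(y),y]$. Hence $p_X=P_X(z_C)=P_X(y)$ and $u=y-z_C$. The projection inequality for $y\in Y$ then gives
\[
  \scal{u}{v}=\scal{y-p_Y}{z_C-p_Y}-\norm{v}^2\le-\norm{v}^2\le 0,
\]
so $\alpha,\beta\ge0$. Your quantity $\scal{z_C-p_X}{y-p_X}$ equals $2\norm{u}^2$ and carries no information. Your other quantity, $\scal{z_C-p_Y}{P_X(y)-p_Y}=\norm{v}^2-\scal{u}{v}$, is the relevant one, but it only becomes usable after identifying $P_X(y)=p_X$.

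The same inequality settles your degenerate cases:
\begin{itemize}
\item $u=0$ forces $y\in X$, so $z_C=y\in X\cap Y$.
\item If $u,v\neq0$, they cannot be positively parallel, since $\scal{u}{v}<0$.
\item If they were antiparallel, $W_X$ and $W_Y$ would be disjoint, contradicting $s\in X\cap Y$.
\end{itemize}
Hence $\Delta>0$ whenever $u,v\neq0$, so $H_X\cap H_Y\neq\emptyset$ and the circumcenter exists. The case $v=0$ is covered by your whole-space convention. With this inserted, Step~2, and with it your whole reconstruction, goes through.
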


\section{Superlinear and Q-quadratic convergence}%
\label{sec:main}

Throughout this section $\aff(X)=\aff(Y)=L$ with $d=\dim L$.
Since $X,Y\subset L$, both $P_X$ and $P_Y$ map into $L$;
reflections preserve $L$; and the circumcenter of a triple
in $L$ lies in $L$.
Hence $z^1\in L$, and by induction $z^k\in L$ for all
$k\geq 1$.

The reduction to $\re^d$ rests on the equivariance of all
three operators defining $T$ under affine isometries.

\begin{lemma}[Equivariance of \cCRM\ under affine isometries]%
\label{lem:equivariance}
Let $\phi:L\to\re^d$, $\phi(z)=Q^\top(z-z_0)$, be an affine
isometry, where $z_0\in L$ and the columns of
$Q\in\re^{n\times d}$ form an orthonormal basis of $V_L$.
For every nonempty closed convex $C\subset L$ and every
$z\in L$:
\begin{enumerate}[\upshape(i),nosep]
  \item $P_{\phi(C)}(\phi(z))=\phi(P_C(z))$ and
        $R_{\phi(C)}(\phi(z))=\phi(R_C(z))$;
  \item $\circum\{\phi(a),\phi(b),\phi(c)\}
        =\phi(\circum\{a,b,c\})$ for every triple
        $a,b,c\in L$ whose circumcenter is defined;
  \item consequently $T_{\phi(X),\phi(Y)}(\phi(z))
        =\phi\bigl(T_{X,Y}(z)\bigr)$, and $\phi$ maps the
        \cCRM\ sequence for $(X,Y)$ starting at $z^1\in L$
        to the \cCRM\ sequence for $(\phi(X),\phi(Y))$
        starting at $\phi(z^1)$.
\end{enumerate}
\end{lemma}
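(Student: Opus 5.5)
The plan is to verify each operator separately and then compose, using only two facts about $\phi$. First, $\phi$ is a bijection from $L$ onto $\re^d$ that preserves distances: for $z,w\in L$ we have $z-w\in V_L$, so $\norm{\phi(z)-\phi(w)}=\norm{Q^\top(z-w)}=\norm{z-w}$, because $QQ^\top$ is the orthogonal projector onto $V_L$ and fixes $z-w$. Second, $\phi$ is affine, so it preserves affine combinations. Its inverse is $\phi^{-1}(y)=z_0+Qy$.

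For (i), $P_C(z)$ is the unique minimizer of $\norm{z-c}$ over $c\in C$. Since $C\subset L$ and $\phi$ is a distance-preserving bijection $L\to\re^d$, the point $c\mapsto\phi(c)$ minimizes $\norm{\phi(z)-y}$ over $y\in\phi(C)$ exactly when $c$ minimizes $\norm{z-c}$ over $C$. The set $\phi(C)$ is nonempty, closed and convex, being the image of $C$ under an affine isometry, so its projection is well defined and equals $\phi(P_C(z))$. The reflection identity then follows from affinity: $R_{\phi(C)}(\phi(z))=2\phi(P_C(z))-\phi(z)=\phi(2P_C(z)-z)$, since the coefficients $2$ and $-1$ sum to one.

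For (ii), recall that $\circum\{a,b,c\}$ is the unique point of $\aff\{a,b,c\}$ equidistant from $a$, $b$ and $c$. Because $a,b,c\in L$, we have $\aff\{a,b,c\}\subset L$. The map $\phi$ sends $\aff\{a,b,c\}$ bijectively onto $\aff\{\phi(a),\phi(b),\phi(c)\}$ and preserves all distances. Hence a point $p$ satisfies the defining conditions for $(a,b,c)$ if and only if $\phi(p)$ satisfies them for the images. Existence and uniqueness therefore transfer in both directions, which gives the identity.

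For (iii), I will unwind \cref{eq:cCRM}. Applying (i) twice gives $P_{\phi(Y)}P_{\phi(X)}(\phi(z))=\phi(P_YP_X(z))$. Applying (i) once more, together with the affinity of the midpoint, gives $(\phi(z))_C=\phi(z_C)$; here $P_X(z)$, $P_YP_X(z)$ and $z_C$ all lie in $L$, so every application of (i) is legitimate. The reflections of $\phi(z_C)$ are the images of the reflections of $z_C$ by (i), and (ii) then yields $T_{\phi(X),\phi(Y)}(\phi(z))=\phi(T_{X,Y}(z))$. The sequence statement follows by induction on $k$, using that $z^k\in L$ for $k\ge1$, as noted at the start of the section.

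The only mildly delicate point is bookkeeping rather than substance. One must check that every point to which $\phi$ is applied lies in $L$, since $\phi$ is an isometry only on $L$ and not on $\re^n$, and that the circumcenter is defined for both triples or for neither. Both follow from the observations above.
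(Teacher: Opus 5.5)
Your proposal is correct and follows essentially the same route as the paper: the isometric bijection $\phi:L\to\re^d$, transfer of the unique minimizer for $P_C$ and of $R_C$ as an affine combination, the equidistant-point characterization of the circumcenter in $\aff\{a,b,c\}$, and then composition plus induction from $z^1\in L$. The one point the paper states explicitly that you leave implicit is that the original triple $\{z_C,R_X(z_C),R_Y(z_C)\}$ has a circumcenter, by the well-posedness of \cCRM; this is what lets your ``both or neither'' transfer in (ii) produce the image circumcenter in (iii).
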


\begin{proof}
The map $\phi$ is a bijective isometry from the affine space
$L$ onto $\re^d$: for $u,v\in L$, $u-v\in V_L$ and
$\norm{\phi(u)-\phi(v)}=\norm{Q^\top(u-v)}=\norm{u-v}$ since
$Q$ has orthonormal columns spanning $V_L$. Its inverse is
$\phi^{-1}(y)=z_0+Qy$.

(i) Fix $z\in L$. For every $w\in C\subset L$ we have
$\norm{\phi(z)-\phi(w)}=\norm{z-w}$, so $\phi$ matches the
objective $\norm{z-\,\cdot\,}$ minimized over $C$ with the
objective $\norm{\phi(z)-\,\cdot\,}$ minimized over
$\phi(C)$. Since $\phi$ is a bijection onto $\re^d$ and
$\phi(C)$ is closed and convex, the unique minimizer over
$\phi(C)$ is the image of the unique minimizer over $C$,
i.e.\ $P_{\phi(C)}(\phi(z))=\phi(P_C(z))$. For the
reflection, write $R_C(z)=2P_C(z)-z$; the coefficients
$2$ and $-1$ sum to $1$, so this is an affine combination of
$P_C(z)$ and $z$, and an affine map commutes with affine
combinations. Hence
$R_{\phi(C)}(\phi(z))=2\phi(P_C(z))-\phi(z)
=\phi\bigl(2P_C(z)-z\bigr)=\phi(R_C(z))$.

(ii) By definition~\cite[Eq.~(2)]{Behling:2018a},
$\circum\{a,b,c\}$, when it exists, is the unique point
$p\in\aff\{a,b,c\}$ with
$\norm{p-a}=\norm{p-b}=\norm{p-c}$; this characterization
makes no reference to the dimension of $\aff\{a,b,c\}$ and so
covers collinear and coincident triples alike. Because
$\phi$ is an affine isometry, it maps $\aff\{a,b,c\}$
bijectively onto $\aff\{\phi(a),\phi(b),\phi(c)\}$ and
preserves all three distances, so $\phi(p)$ is the unique
equidistant point of the image triple in its affine hull;
that is, $\circum\{\phi(a),\phi(b),\phi(c)\}=\phi(p)$. In
particular the image circumcenter exists exactly when the
original does.

(iii) The point $z_C=\tfrac12\bigl(P_YP_X(z)
+P_X(P_YP_X(z))\bigr)$ is an affine combination (coefficients
$\tfrac12,\tfrac12$) of points produced by the projections
in~(i), so $\phi(z_C)=(\phi(z))_C$ for the image pair.
Applying~(i) to $R_X,R_Y$ and~(ii) to the resulting triple,
all of which lie in $L$ and have a defined circumcenter by
the well-posedness of \cCRM~\cite{Behling:2018a,Behling:2024},
gives $T_{\phi(X),\phi(Y)}(\phi(z))=\phi(T_{X,Y}(z))$. The
final claim follows by induction on $k$, the base case being
$z^1\in L$.
\end{proof}

Because $X$ and $Y$ are full-dimensional in $L$,
condition~\ref{H1prime} gives $\inte_L(X\cap Y)\neq\emptyset$,
so by Lemma~\ref{lem:equivariance} the image pair
$(\phi(X),\phi(Y))$ in $\re^d$ satisfies
$\inte(\phi(X)\cap\phi(Y))\neq\emptyset$, and its boundaries
have the same smoothness as $\partial_L X$ and $\partial_L Y$
because $\phi$ is a $\mathcal{C}^\infty$ diffeomorphism.

One point on the logical order. By
Lemma~\ref{lem:background}, the sequence $(z^k)_{k\in\na}$ is
F\'{e}jer monotone with respect to $X\cap Y$ and converges to
some $\bar z\in X\cap Y$ from any starting point, with no
smoothness assumed. The conditions \ref{H2prime}
and~\ref{H2pp} below are imposed at this limit, and the
curvatures $\kappa_X,\kappa_Y$ and the modulus $\omega$ are
evaluated there. The local geometry is thus constrained only
at the point the iteration selects, not at every candidate
solution.

\begin{corollary}[Superlinear convergence]\label{cor:super}
Let $X,Y\subset\re^n$ be closed convex with
$X\cap Y\neq\emptyset$ and $\aff(X)=\aff(Y)$.
For any $z^0\in\re^n$ the sequence $(z^k)_{k\in\na}$
converges to some $\bar z\in X\cap Y$; if \ref{H1prime}
and~\ref{H2prime} hold at $\bar z$, the convergence is
superlinear.
\end{corollary}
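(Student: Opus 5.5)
The plan is to reduce to the full-dimensional setting and then rerun the $\mathcal{C}^1$ tangent-hyperplane argument of~\cite{Behling:2024}, using the error bound~\cref{eq:EB} for the final step. Global convergence to some $\bar z\in X\cap Y$ is already given by Lemma~\ref{lem:background}, so only the rate needs proof. Since $z^k\in L$ for $k\geq1$, I apply the affine isometry $\phi$ of Lemma~\ref{lem:equivariance}. It maps the tail $(z^k)_{k\geq1}$ to the \cCRM\ sequence for $(\phi(X),\phi(Y))$ in $\re^d$ and preserves all distances, so superlinear convergence transfers back unchanged. In $\re^d$ the sets are full-dimensional: \ref{H1prime} gives $\inte(\phi(X)\cap\phi(Y))\neq\emptyset$, and \ref{H2prime} gives $\mathcal{C}^1$ hypersurface boundaries near $\phi(\bar z)$. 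These are exactly \ref{H1}--\ref{H2}, so Theorem~\ref{thm:orig} applies directly and yields the claim. This already gives a complete proof, but I would also write the self-contained argument, since it is the template for Theorem~\ref{thm:main}.

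\textbf{Self-contained argument in $L$.} Fix $k$ large and write $z=z^k$ and $z_C=z_C^k$.

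\textbf{Case $z_C\in X\cap Y$.} Then $T(z)=z_C$, and $\dist(T(z),X\cap Y)=0$.

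\textbf{Case $z_C\notin X$.} Let $p=P_X(z_C)$. Then $p\in\partial_L X$, and since $z_C-p\in V_L$, the hyperplane $H_X$ of Lemma~\ref{lem:background}(iii), taken within $L$, is exactly the tangent hyperplane to $\partial_L X$ at $p$. If $z_C\in X$ instead, I choose $H_X$ degenerate (all of $L$) and the $X$-part of the estimate is trivial.

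By Lemma~\ref{lem:background}(iii), $w\coloneqq T(z)\in H_X\cap H_Y$. A first-order version of Lemma~\ref{lem:curvature}, with the $\mathcal{C}^1$ Taylor remainder replacing the Hessian term, gives
\[
\dist(w,X)=o(\norm{w-p})
\]
uniformly in $p$ near $\bar z$. The same holds for $Y$.

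\textbf{Bounding $\norm{w-p}$.} Pick $s=P_{X\cap Y}(z)$. Lemma~\ref{lem:background}(ii) gives $\norm{w-s}\leq\norm{z-s}$ and $\norm{z_C-s}\leq\norm{z-s}$. Nonexpansiveness of $P_X$ gives $\norm{p-s}\leq\norm{z_C-s}$. Hence
\[
\norm{w-p}\leq 2\dist(z,X\cap Y).
\]

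\textbf{Conclusion.} Combining with~\cref{eq:EB} at $w$, which lies near $\bar z$,
\[
\omega\,\dist(z^{k+1},X\cap Y)\leq\max\{\dist(w,X),\dist(w,Y)\}=o(\dist(z^k,X\cap Y)).
\]
To pass from distances to the limit point, I would use~\cref{eq:fejer_bound}, giving $\norm{z^{k+1}-\bar z}\leq2\dist(z^{k+1},X\cap Y)$, together with $\dist(z^k,X\cap Y)\leq\norm{z^k-\bar z}$. This gives $\norm{z^{k+1}-\bar z}=o(\norm{z^k-\bar z})$, which is superlinear convergence.

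\textbf{Main obstacle.} The delicate step is the uniformity of the $o(\cdot)$ in the tangent-plane estimate as the base point $p$ varies. It requires that $p=P_X(z_C^k)$ lies in the chart neighborhood $U$ and is a regular point. This follows from $p\to\bar z$, which holds since $\norm{p-\bar z}\leq\norm{z_C^k-\bar z}\leq\norm{z^k-\bar z}$. The uniformity then comes from uniform continuity of $\nabla_{V_L}g$ on a compact neighborhood, exactly as in the proof of Lemma~\ref{lem:curvature}. A second point is checking that the hyperplanes of Lemma~\ref{lem:background}(iii) really are the tangent hyperplanes within $L$ rather than in $\re^n$. The reduction via $\phi$ settles this cleanly, which is why I would lead with it.
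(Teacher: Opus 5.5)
Your first paragraph is exactly the paper's proof. You transport the tail $(z^k)_{k\geq1}$ to $\re^d$ by $\phi$, read off \ref{H1}--\ref{H2} for the image pair from \ref{H1prime}--\ref{H2prime}, apply Theorem~\ref{thm:orig}, and transfer the rate back because $\phi$ preserves all distances.

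Your supplementary self-contained argument is a correct $\mathcal{C}^1$ analogue of the proof of Theorem~\ref{thm:main}, and your bound $\norm{w-p}\leq 2\dist(z,X\cap Y)$ via $s=P_{X\cap Y}(z)$ avoids the $\varepsilon$ the paper's proof carries. Its only loose point is the degenerate-hyperplane remark: the estimate there is not actually trivial. That case is vacuous, however, since $z_C\in X\cup Y$ already forces $z_C\in X\cap Y$.
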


\begin{proof}
By Lemma~\ref{lem:equivariance}, the affine isometry
$\phi:L\to\re^d$ sends $(z^k)_{k\geq 1}$ to the \cCRM\
sequence for $(\phi(X),\phi(Y))$ in $\re^d$.
Condition~\ref{H1prime} gives
$\inte(\phi(X)\cap\phi(Y))\neq\emptyset$, i.e.,~\ref{H1}
for the image pair.
Condition~\ref{H2prime} gives that $\phi(\partial_L X)$
and $\phi(\partial_L Y)$ are $\mathcal{C}^1$ manifolds of
dimension $d-1$ in $\re^d$, i.e.,~\ref{H2} for the image
pair.
Theorem~\ref{thm:orig} applies to the image sequence, which
converges superlinearly to $\phi(\bar z)$. Since $\phi^{-1}$
is also an isometry,
$\norm{z^k-\bar z}=\norm{\phi(z^k)-\phi(\bar z)}$ for all
$k$, so the ratios defining superlinear convergence are
identical for the two sequences and the rate transfers back
to $(z^k)_{k\in\na}$.
\end{proof}

\begin{theorem}[Q-quadratic convergence]\label{thm:main}
Let $X,Y\subset\re^n$ be closed convex with
$X\cap Y\neq\emptyset$ and $\aff(X)=\aff(Y)$.
For any $z^0\in\re^n$ the sequence $(z^k)_{k\in\na}$
converges to some $\bar z\in X\cap Y$; if \ref{H1prime}
and~\ref{H2pp} hold at $\bar z$, the convergence is
Q-quadratic, with
\begin{equation}\label{eq:quad_rate}
  \limsup_{k\to\infty}
  \frac{\dist(z^{k+1},X\cap Y)}{\dist(z^k,X\cap Y)^2}
  \leq\frac{2\max(\kappa_X,\kappa_Y)}{\omega},
\end{equation}
where $\kappa_X,\kappa_Y$ are the curvatures of
$\partial_L X$ and $\partial_L Y$ at $\bar z$ and $\omega$
is the regularity modulus of the pair at $\bar z$
in~\cref{eq:omega_def}.
\end{theorem}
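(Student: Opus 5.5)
The plan is to reduce to the full-dimensional case and then make the $o(\cdot)$ estimate in the proof of Theorem~\ref{thm:orig} quantitative using Lemma~\ref{lem:curvature}.

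\emph{Step 1: reduction to $L$.} By Lemma~\ref{lem:equivariance}, the affine isometry $\phi$ maps the iterates $z^k$ ($k\ge1$) to the \cCRM\ sequence in $\re^d$. It also preserves distances to $X$, $Y$ and $X\cap Y$, the curvatures $\kappa_X,\kappa_Y$, and the modulus $\omega$; for $\omega$ we use that $\dist$ computed in $L$ agrees with $\dist$ in $\re^n$ for points of $L$. So we may assume $L=\re^d$ and $\inte(X\cap Y)\neq\emptyset$. Global convergence to $\bar z$ is given by Lemma~\ref{lem:background}.

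\emph{Step 2: one-step estimate.} Write $\delta_k\coloneqq\dist(z^k,X\cap Y)\to0$. Lemma~\ref{lem:background}(ii) gives $\norm{z_C^k-s}\le\norm{z^k-s}$ for every $s\in X\cap Y$. Taking $s=P_{X\cap Y}(z^k)$ yields $\dist(z_C^k,X\cap Y)\le\delta_k$, and also $z_C^k\to\bar z$. Set $p_X^k=P_X(z_C^k)$ and $p_Y^k=P_Y(z_C^k)$.

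If $z_C^k\in X$, then $H_X$ degenerates and I treat this case separately; the natural reading is that the iterate then lies in $X\cap Y$ or the corresponding constraint is dropped. Otherwise $p_X^k$ is a boundary point near $\bar z$. There it is regular, and $H_X^k$ is exactly its tangent hyperplane, because the normal cone at a $\mathcal C^1$ boundary point is the ray spanned by $\nabla g$.

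By Lemma~\ref{lem:background}(iii), $T(z^k)=P_{H_X^k\cap H_Y^k}(z_C^k)$ lies in $H_X^k$. Lemma~\ref{lem:curvature} then gives, for any $\eta>0$ and $k$ large,
\[
\dist(z^{k+1},X)\le(\tfrac12\kappa_X+\eta)\norm{z^{k+1}-p_X^k}^2 ,
\]
and the same bound holds for $Y$.

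\emph{Step 3: bound $\norm{z^{k+1}-p_X^k}$ by $2\delta_k$.} Let $s_k=P_{X\cap Y}(z_C^k)$. The point $z_C^k$ projects onto $H_X^k\cap H_Y^k$, and $p_X^k$ is its projection onto the half-space bounded by $H_X^k$ containing $X$. Hence $\norm{z^{k+1}-p_X^k}\le\norm{z^{k+1}-z_C^k}+\norm{z_C^k-p_X^k}$.

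The first term is at most $\dist(z_C^k,H_X^k\cap H_Y^k)$. Since $X\cap Y$ lies in both half-spaces, I would bound it by $\norm{z_C^k-s_k}+\dots$; the cleaner route is that the circumcenter step is nonexpansive towards $s_k$. Then $\norm{z^{k+1}-z_C^k}\le\norm{z_C^k-s_k}\le\delta_k$, using Lemma~\ref{lem:background}(i)-type Fejér inequalities. Also $\norm{z_C^k-p_X^k}=\dist(z_C^k,X)\le\delta_k$. Together these give $\norm{z^{k+1}-p_X^k}\le2\delta_k$.

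That bound leads to $\dist(z^{k+1},X)\le(2\kappa_X+4\eta)\delta_k^2$. The factor $4$ is too large by a factor of $2$ relative to the claimed constant. I expect this to be the main obstacle: the stated constant $2\max\kappa/\omega$ seems to require $\norm{z^{k+1}-p_X^k}^2\le 4\delta_k^2$ together with the $\frac12$ in Lemma~\ref{lem:curvature}, which gives only $2\kappa\delta_k^2$. So the sharp bound must come from the triangle step above being exactly $\norm{z^{k+1}-p^k}\le 2\delta_k$. I would then verify carefully that $\frac12\kappa\cdot 4\delta_k^2=2\kappa\delta_k^2$, which is consistent, so the factor works out. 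The care is in justifying $\norm{z^{k+1}-z_C^k}\le\delta_k$. For this I would use that $z^{k+1}$ is the projection of $z_C^k$ onto an affine set, and that the Fejér property $\norm{z^{k+1}-s}\le\norm{z_C^k-s}$ forces $z^{k+1}$ into the ball of radius $\norm{z_C^k-s_k}$ around $s_k$. This gives $\norm{z^{k+1}-z_C^k}\le 2\delta_k$ in general, and the improvement to $\delta_k$ comes from orthogonality of the projection: $\norm{z^{k+1}-z_C^k}^2+\norm{z^{k+1}-s'}^2=\norm{z_C^k-s'}^2$ for $s'$ in the affine set. If a gap remains, I would settle for bounding $\norm{z^{k+1}-p_X^k}$ via Pythagoras in the two-dimensional picture.

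\emph{Step 4: conclude.} For $k$ large, $z^{k+1}$ lies in the neighborhood where \eqref{eq:EB} holds with constant $\omega-\eta$ (by definition \eqref{eq:omega_def}). Therefore
\[
\delta_{k+1}\le\frac{\max\{\dist(z^{k+1},X),\dist(z^{k+1},Y)\}}{\omega-\eta}\le\frac{(2\max(\kappa_X,\kappa_Y)+O(\eta))\,\delta_k^2}{\omega-\eta}.
\]
Letting $\eta\downarrow0$ gives \eqref{eq:quad_rate}. Finally, \eqref{eq:fejer_bound} and $\norm{z^k-\bar z}\ge\delta_k$ convert this into the bound $\norm{z^{k+1}-\bar z}\le 2\delta_{k+1}\le C'\norm{z^k-\bar z}^2$, which is Q-quadratic convergence of the iterates in the sense of Definition~\ref{def:rates}.
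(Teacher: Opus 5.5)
Steps~1, 2 and~4 are sound, but Step~3 has a genuine gap. Splitting $\norm{z^{k+1}-p_X^k}\le\norm{z^{k+1}-z_C^k}+\norm{z_C^k-p_X^k}$ obliges you to show $\norm{z^{k+1}-z_C^k}\le\delta_k$, and neither argument you offer gives this.

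Lemma~\ref{lem:background}(ii) yields only $\norm{z^{k+1}-s_k}\le\norm{z_C^k-s_k}$ with $s_k=P_{X\cap Y}(z_C^k)$. That places $z^{k+1}$ in the ball about $s_k$ of radius $\norm{z_C^k-s_k}$, which has $z_C^k$ on its boundary. Hence $\norm{z^{k+1}-z_C^k}\le 2\norm{z_C^k-s_k}\le 2\delta_k$, and nothing better. The orthogonality identity $\norm{z^{k+1}-z_C^k}^2+\norm{z^{k+1}-s'}^2=\norm{z_C^k-s'}^2$ holds only for $s'\in H_X^k\cap H_Y^k$. But $X\cap Y$ lies in the intersection of the two half-spaces, not of the two hyperplanes, so you cannot take $s'=s_k$.

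What your decomposition actually delivers is $\norm{z^{k+1}-p_X^k}\le 2\delta_k+\delta_k=3\delta_k$, i.e.\ the constant $\tfrac92\max(\kappa_X,\kappa_Y)/\omega$. The arithmetic you worried about is not the issue: $(\tfrac12\kappa+\eta)\cdot 4\delta_k^2$ is exactly the target once $\norm{z^{k+1}-p_X^k}\le 2\delta_k$ is established.

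The repair is to replace the triangle inequality by Pythagoras in $H_X^k$; you already have the ingredients. Since $H_X^k$ passes through $p_X^k$ orthogonally to $z_C^k-p_X^k$, we have $p_X^k=P_{H_X^k}(z_C^k)$. As $z^{k+1}\in H_X^k$, this gives $\norm{z^{k+1}-p_X^k}^2=\norm{z^{k+1}-z_C^k}^2-\norm{z_C^k-p_X^k}^2\le\norm{z^{k+1}-z_C^k}^2$. Combined with your ball bound, $\norm{z^{k+1}-p_X^k}\le 2\delta_k$, and the stated constant follows.

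The paper uses the same Pythagorean step but bounds $\norm{z_C^k-z^{k+1}}$ through $\bar z$. It combines $\norm{z_C^k-\bar z}\le\norm{z^k-\bar z}\le 2\delta_k$ from \eqref{eq:fejer_bound} with $\norm{z^{k+1}-\bar z}\le\varepsilon\delta_k$ from Corollary~\ref{cor:super}, obtaining $(2+\varepsilon)\delta_k$. Centering the ball at $P_{X\cap Y}(z_C^k)$, as you did, removes the need for the superlinear corollary and for $\varepsilon$.

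Finally, the case $z_C^k\in X$ needs an actual argument rather than an appeal to a natural reading. Put $a=P_YP_X(z^k)$ and $b=P_X(a)$, so $z_C^k=\tfrac12(a+b)$. If $a\ne b$, then $z_C^k$ lies on the open segment from $b$ towards $a$ along the normal ray, so $z_C^k\notin X$. Also, $z_C^k\in Y$ would force $\scal{P_X(z^k)-a}{b-a}\le 0$, while the projection inequality at $b$ gives $\scal{P_X(z^k)-a}{b-a}\ge\norm{b-a}^2$. So $z_C^k\in X\cup Y$ forces $a=b$. Then $z_C^k\in X\cap Y$, $z^{k+1}=z_C^k$, and $\delta_{k+1}=0$.
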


\begin{proof}
By Lemma~\ref{lem:equivariance} the affine isometry
$\phi:L\to\re^d$ carries the iterates into $\re^d$, where
the image pair satisfies $\inte(\phi(X)\cap\phi(Y))\neq
\emptyset$ and has $\mathcal{C}^2$ relative boundaries of
dimension $d-1$. Distances, the error-bound modulus
$\omega$ of~\cref{eq:omega_def}, and the boundary curvatures
$\kappa_X,\kappa_Y$ of~\cref{eq:kappa_def} are all preserved
by $\phi$: distances because $\phi$ is an isometry, $\omega$
because both numerator and denominator in~\cref{eq:omega_def}
are distances, and the curvatures because $\kappa$ is an
intrinsic invariant of the hypersurface. It therefore
suffices to prove~\cref{eq:quad_rate} for the image pair in
$\re^d$ with the same constants, and we work there,
writing $X,Y$ for $\phi(X),\phi(Y)$ to lighten notation.

Since \ref{H2pp} implies \ref{H2prime},
Corollary~\ref{cor:super} gives superlinear convergence of
$(z^k)_{k\in\na}$ to $\bar z$, so
$\norm{T(z^k)-\bar z}=o(\dist(z^k,X\cap Y))$.
Fix $\varepsilon>0$; for all large $k$,
$\norm{T(z^k)-\bar z}\leq\varepsilon\,\dist(z^k,X\cap Y)$.
By~\cref{eq:fejer_bound} and Lemma~\ref{lem:background}(ii),
\[
  \norm{z_C^k-\bar z}\leq\norm{z^k-\bar z}
  \leq 2\,\dist(z^k,X\cap Y).
\]
The triangle inequality then gives
\[
  \norm{z_C^k-T(z^k)}
  \leq\norm{z_C^k-\bar z}+\norm{\bar z-T(z^k)}
  \leq(2+\varepsilon)\,\dist(z^k,X\cap Y).
\]
By Lemma~\ref{lem:background}(iii),
$T(z^k)=P_{H_X^k\cap H_Y^k}(z_C^k)$, where $H_X^k$ is
tangent to $\partial X$ at $P_X(z_C^k)$
by~\cite[Theorems~23.2 and~25.1]{Rockafellar:1970}.
For $k$ large enough $z_C^k\notin X$ (otherwise
$\dist(z^{k+1},X\cap Y)=0$ and the bound is trivial), so
$P_X(z_C^k)\in\partial X$; since $P_X(z_C^k)\to\bar z$ and
$\partial X$ is $\mathcal{C}^2$ near $\bar z$ by~\ref{H2pp},
$P_X(z_C^k)$ is a regular boundary point in the
neighborhood~$U$ of Lemma~\ref{lem:curvature} for all large
$k$.
Because $H_X^k$ supports $X$ at $P_X(z_C^k)$ with outward
normal $z_C^k-P_X(z_C^k)$, the point $P_X(z_C^k)$ is the
orthogonal projection of $z_C^k$ onto $H_X^k$; as
$T(z^k)\in H_X^k$, the Pythagorean identity
\[
  \norm{z_C^k-T(z^k)}^2
  =\norm{z_C^k-P_X(z_C^k)}^2+\norm{P_X(z_C^k)-T(z^k)}^2
\]
holds, whence
$\norm{T(z^k)-P_X(z_C^k)}\leq\norm{z_C^k-T(z^k)}$.
Fix $\eta>0$. Applying the uniform bound of
Lemma~\ref{lem:curvature} with $C=X$, the moving regular
point $p=P_X(z_C^k)$, and $w=T(z^k)\in H_X^k$ gives, for all
large $k$,
\[
  \dist(T(z^k),X)
  \leq\bigl(\tfrac{1}{2}\kappa_X+\eta\bigr)
       \norm{T(z^k)-P_X(z_C^k)}^2
  \leq\bigl(\tfrac{1}{2}\kappa_X+\eta\bigr)
       (2+\varepsilon)^2\dist(z^k,X\cap Y)^2,
\]
and symmetrically
$\dist(T(z^k),Y)\leq(\tfrac{1}{2}\kappa_Y+\eta)(2+\varepsilon)^2
\dist(z^k,X\cap Y)^2$.
Applying~\cref{eq:EB} gives
\[
  \dist(z^{k+1},X\cap Y)
  \leq\frac{(2+\varepsilon)^2
       \bigl(\tfrac{1}{2}\max(\kappa_X,\kappa_Y)+\eta\bigr)}
       {\omega}
       \,\dist(z^k,X\cap Y)^2.
\]
Since $\varepsilon>0$ and $\eta>0$ are
arbitrary,~\cref{eq:quad_rate} follows by taking $\limsup$.
\end{proof}

\begin{remark}\label{rem:rate}
The bound $2\max(\kappa_X,\kappa_Y)/\omega$
in~\cref{eq:quad_rate} is an upper estimate, obtained by
combining the factor $\tfrac12$ of
Lemma~\ref{lem:curvature} with the F\'{e}jer bound
$\norm{z^k-\bar z}\leq 2\,\dist(z^k,X\cap Y)$
in~\cref{eq:fejer_bound}. It is generally not tight: the
contraction $\norm{T(z^k)-P_X(z_C^k)}/\dist(z^k,X\cap Y)$ has
a limit strictly below the worst-case value $2+\varepsilon$
used in the proof (see Table~\ref{tab:numerics} and the
discussion in Example~\ref{ex:disks}), so we do not claim
the constant is sharp.
The F\'{e}jer factor of~\cref{eq:fejer_bound} can be dropped
whenever $\dist(z^k,X\cap Y)=\norm{z^k-\bar z}$ along the
trajectory, that is, whenever the nearest point of
$X\cap Y$ to each iterate is the limit $\bar z$ itself. This
holds when $\bar z$ is an isolated point of $X\cap Y$, and
also when $\bar z$ is a corner of $X\cap Y$ approached from
outside, as in Example~\ref{ex:disks}; in either case the
sharper bound $\tfrac12\max(\kappa_X,\kappa_Y)/\omega$
applies. An isolated intersection forces $d=2$ by a
dimension count, two relative boundaries of dimension
$d-1$ meeting transversally in $L$ intersect in a set of
dimension $d-2$, a point only when $d=2$, whereas for the
higher-dimensional examples of Section~\ref{sec:applications}
($d>2$), $X\cap Y$ has positive dimension and the general
bound $2\max(\kappa_X,\kappa_Y)/\omega$ applies.
The constant depends only on the local geometry at $\bar z$,
not on the ambient dimension or on how $L$ sits in $\re^n$.
For $d=n$, Theorem~\ref{thm:main} sharpens the superlinear
rate of~\cite[Theorem~5.3]{Behling:2024} to Q-quadratic
whenever the boundaries are $\mathcal{C}^2$.
As in Newton's method, the rate is governed by second-order
geometry, here boundary curvature $\kappa$, relative to a
first-order regularity constant $\omega$, despite \cCRM\
computing only projections and circumcenters.
\end{remark}

\section{Examples and applications}\label{sec:applications}

We illustrate the hypotheses and rates of
Corollary~\ref{cor:super} and Theorem~\ref{thm:main} with
concrete examples.
The first two are geometric; the remaining ones show that
the setting $\aff(X)=\aff(Y)\subsetneq\re^n$, where
Theorem~\ref{thm:orig} is silent, arises naturally in
structured feasibility problems.

\begin{example}[Overlapping discs in a plane]%
\label{ex:disks}
Let $L=\{(x,y,0)\in\re^3\}$ and set
\[
  X=\{(x,y,0)\in L:x^2+y^2\leq 4\},\quad
  Y=\bigl\{(x,y,0)\in L:(x-\sqrt{15})^2+y^2\leq 4\bigr\}.
\]
Both are discs of radius~$2$ in $L$, with centers
$\sqrt{15}\approx 3.87$ apart; they overlap in a thin lens
with nonempty $L$-interior.
Since $L$ has measure zero in $\re^3$,
$\inte_{\re^3}(X\cap Y)=\emptyset$ and
Theorem~\ref{thm:orig} does not apply.
The two boundary circles meet at $(\sqrt{15}/2,\pm1/2,0)$.

Take $\bar z=(\sqrt{15}/2,1/2,0)$.
The point $(\sqrt{15}/2,0,0)$ satisfies
$(\sqrt{15}/2)^2=15/4<4$, so it belongs to
$\operatorname{ri}(X)\cap\operatorname{ri}(Y)$,
giving~\ref{H1prime}.
Both relative boundaries are $\mathcal{C}^\infty$ circles,
so~\ref{H2prime} and~\ref{H2pp} hold.
The isometry $\phi(x,y,0)=(x,y)$ identifies the \cCRM\
sequence in $\re^3$ with \cCRM\ for two overlapping discs
in $\re^2$ with nonempty interior.
Since each circle of radius~$2$ has constant curvature
$\kappa=\tfrac{1}{2}$, the Q-quadratic constant is the same
for every limit point in $X\cap Y$, a consequence of the
circles having constant curvature.

Table~\ref{tab:numerics} reports five \cCRM\ steps from
$z^0=(\sqrt{15}/2,4,\tfrac{1}{2})$, which lies outside $L$
and outside both discs.
The first step projects onto $L$; all subsequent iterates
remain there. The quadratic ratio
$\dist(z^{k+1},X\cap Y)/\dist(z^k,X\cap Y)^2$ stabilizes at
$0.5551$ from $k=3$ onward, confirming Q-quadratic
convergence. This observed constant sits well below the
upper bound of Remark~\ref{rem:rate}. The limit $\bar z$ is
a corner of the lens $X\cap Y$, the transversal crossing of
the two circles; the iterates approach it from outside the
lens, so along the trajectory the nearest point of
$X\cap Y$ is $\bar z$ itself and
$\dist(z^k,X\cap Y)=\norm{z^k-\bar z}$ (this is what the
table tabulates). The relevant comparison is therefore the
sharper bound $\tfrac12\kappa/\omega$ of
Remark~\ref{rem:rate}, in which the F\'{e}jer factor is
absent. With $\kappa=\tfrac12$ and the regularity modulus
of~\cref{eq:omega_def} computed directly as
$\omega\approx0.252$, this bound is
$\tfrac12\kappa/\omega\approx0.99$. The observed $0.5551$
realizes a little over half of it, because the proof bounds
the contraction
$\norm{T(z^k)-P_X(z_C^k)}/\dist(z^k,X\cap Y)$ by its
worst-case value, whereas its true limit here is $0.742$.
The gap is an artifact of that estimate, not of the
iteration.

\begin{table}[ht]
\caption{\cCRM\ iterates for Example~\ref{ex:disks},
  $z^0=(\sqrt{15}/2,\,4,\,\tfrac{1}{2})$,
  $\bar z=(\sqrt{15}/2,\,\tfrac{1}{2},\,0)$. The $k=0$ step
  projects $z^0$ onto $L$ and is pre-asymptotic, so its
  quadratic ratio does not yet reflect the squared-error
  scaling; the ratio settles from $k=3$ on.}
\label{tab:numerics}
\renewcommand{\arraystretch}{1.2}
\begin{tabular}{ccccc}
\toprule
$k$ &
$\norm{z^k-\bar z}$ &
$\norm{z^{k+1}-\bar z}$ &
$\dfrac{\norm{z^{k+1}-\bar z}}{\norm{z^k-\bar z}}$ &
$\dfrac{\norm{z^{k+1}-\bar z}}{\norm{z^k-\bar z}^2}$ \\[5pt]
\midrule
$0$ & $3.54\times10^{\phantom{-}0}$ & $9.24\times10^{-2}$
    & $2.61\times10^{-2}$  & $0.007$ \\
$1$ & $9.24\times10^{-2}$ & $3.70\times10^{-3}$
    & $4.00\times10^{-2}$  & $0.433$ \\
$2$ & $3.70\times10^{-3}$ & $7.51\times10^{-6}$
    & $2.03\times10^{-3}$  & $0.549$ \\
$3$ & $7.51\times10^{-6}$ & $3.13\times10^{-11}$
    & $4.17\times10^{-6}$  & $0.555$ \\
$4$ & $3.13\times10^{-11}$ & $5.45\times10^{-22}$
    & $1.74\times10^{-11}$ & $0.555$ \\
\bottomrule
\end{tabular}
\end{table}
\end{example}

\begin{example}[Overlapping ellipses in a plane]%
\label{ex:ellipses}
Let $L=\{(x,y,0)\in\re^3\}$ and set
\[
  X=\Bigl\{(x,y,0)\in L:\tfrac{x^2}{4}+y^2\leq 1\Bigr\},\quad
  Y=\Bigl\{(x,y,0)\in L:(x-1)^2+\tfrac{y^2}{4}\leq 1\Bigr\}.
\]
Here $X$ has semi-axes $2$ and $1$ (along $x$ and $y$)
centered at the origin, and $Y$ has semi-axes $1$ and $2$
centered at $(1,0,0)$.
The point $(1/2,0,0)$ satisfies $1/16<1$ and $1/4<1$, so
it belongs to $\operatorname{ri}(X)\cap\operatorname{ri}(Y)$,
giving~\ref{H1prime}.
Both relative boundaries are $\mathcal{C}^\infty$ ellipses,
so~\ref{H2prime} and~\ref{H2pp} hold, and
Theorem~\ref{thm:main} applies.

The curvature of $\partial_L X$ at $(2\cos t,\sin t,0)$ is
\[
  \kappa_X(t)=\frac{2}{(4\sin^2\!t+\cos^2\!t)^{3/2}},
\]
which equals $2$ at the major-axis tips $(t=0,\pi)$ and
$\tfrac{1}{4}$ at the minor-axis tips
$(t=\tfrac{\pi}{2},\tfrac{3\pi}{2})$, an eightfold
variation.
Consequently, the Q-quadratic constant
$\max(\kappa_X,\kappa_Y)/\omega$ depends on the limit
point: different starting points lead to different asymptotic
rates, in contrast to Example~\ref{ex:disks}, where constant
curvature gives a universal rate.
\end{example}

The following example extends Example~4.12
of~\cite{Arefidamghani:2021}, where $Y$ is the $x$-axis,
to the halfplane $Y=\{y\leq 0\}$; it also probes the
boundary case $\beta=0$ and the open case
$\aff(X)\neq\aff(Y)$.

\begin{example}[Epigraph feasibility]\label{ex:epi}
Let $\alpha>1$, $\beta\geq 0$,
$\phi(t)=\lvert t\rvert^\alpha-\beta$, and
\[
  X \coloneqq \{(x,y)\in\re^2 :
               y \geq \lvert x\rvert^\alpha-\beta\},
  \quad
  Y \coloneqq \{(x,y)\in\re^2 : y \leq 0\}.
\]

When $\beta>0$, the intersection
$X\cap Y=\{(x,y):\lvert x\rvert^\alpha-\beta\leq y\leq 0\}$
is a compact lens with nonempty interior in $\re^2$,
so~\ref{H1} and~\ref{H1prime} both hold.
The boundary $\partial X=\{y=\lvert x\rvert^\alpha-\beta\}$
has second derivative
$\alpha(\alpha-1)\lvert x\rvert^{\alpha-2}$, bounded at
$x=0$ if and only if $\alpha\geq 2$.
For $\alpha\geq 2$, condition~\ref{H2pp} holds globally and
Theorem~\ref{thm:main} gives Q-quadratic convergence of
\cCRM.
For $1<\alpha<2$, the boundary is $\mathcal{C}^1$ but not
$\mathcal{C}^2$ at $x=0$, so~\ref{H2prime} holds
but~\ref{H2pp} fails at the vertex, and
Corollary~\ref{cor:super} gives superlinear convergence.
In both cases $\kappa_Y=0$ since $\partial Y=\{y=0\}$ is
flat. Note that for $\beta>0$ the lens has nonempty
interior, so a Slater point exists; the finite-termination
variant of~\cite{Behling:2024b} would then solve the
feasibility problem in finitely many steps. Plain \cCRM,
which we analyze here, does not terminate finitely, and the
Q-quadratic rate describes its genuine asymptotic behavior.

For $\beta=0$ the curve
$\partial X=\{y=\lvert x\rvert^\alpha\}$ meets $\{y=0\}$
tangentially at the origin, $X\cap Y=\{(0,0)\}$, and
$\operatorname{ri}(X)\cap\operatorname{ri}(Y)=\emptyset$,
so~\ref{H1prime} fails.
By~\cite{Arefidamghani:2021}, MAP converges sublinearly and
CRM linearly with constant $1-1/\alpha$.
We show that \cCRM\ also converges linearly with the same
constant.

For $z^k=(x_k,0)$ with $x_k>0$, the projection
$P_X((x_k,0))=(u_k,u_k^\alpha)$ has $u_k^\alpha>0$,
so the centralized point $z_C^k=(a_k,h_k)$ has $h_k>0$
and $R_Y(z_C^k)=(a_k,-h_k)$.
These two points are symmetric about $\{y=0\}$, so the
circumcenter satisfies $z^{k+1}=(x_{k+1},0)$ and the
sequence stays on $\{y=0\}$ by induction.

Equidistance of the circumcenter $(x_{k+1},0)$ from $z_C^k$
and $R_X(z_C^k)=2(p_k,p_k^\alpha)-z_C^k$ gives
\begin{equation}\label{eq:circ_formula}
  (x_{k+1}-p_k)(p_k-a_k) = p_k^\alpha(p_k^\alpha-h_k),
\end{equation}
where $p_k$ is the $x$-coordinate of $P_X(z_C^k)$,
$a_k=(u_k+v_k)/2$, and $h_k=v_k^\alpha/2$, with $v_k$ the
$x$-coordinate of $P_X((u_k,0))$.
The MAP normal equation $x=u(1+\alpha u^{2(\alpha-1)})$,
obtained by setting the derivative of
$(t-x)^2+t^{2\alpha}$ to zero at $t=u$, inverts by iterated
substitution ($u=x-\alpha x^{2\alpha-1}+\cdots$) to give
\[
  u_k=x_k-\alpha x_k^{2\alpha-1}+O(x_k^{4\alpha-3}),\quad
  v_k=x_k-2\alpha x_k^{2\alpha-1}+O(x_k^{4\alpha-3}),
\]
hence
$a_k=x_k-\tfrac{3\alpha}{2}x_k^{2\alpha-1}
+O(x_k^{4\alpha-3})$ and
$h_k=\tfrac{1}{2}x_k^\alpha+O(x_k^{3\alpha-2})$.
The normal equation
$a_k-p_k=\alpha p_k^{\alpha-1}(p_k^\alpha-h_k)$ yields
$p_k=x_k-2\alpha x_k^{2\alpha-1}+O(x_k^{4\alpha-3})$, so
$p_k-a_k=-\tfrac{\alpha}{2}x_k^{2\alpha-1}
+O(x_k^{4\alpha-3})$ and
$p_k^\alpha-h_k=\tfrac{1}{2}x_k^\alpha
+O(x_k^{3\alpha-2})$.
Substituting into~\cref{eq:circ_formula}:
\[
  x_{k+1}=p_k+\frac{p_k^\alpha(p_k^\alpha-h_k)}{p_k-a_k}
  =x_k\!\left(1-\frac{1}{\alpha}\right)+O(x_k^{2\alpha-1}),
\]
so $\lvert x_{k+1}\rvert/\lvert x_k\rvert\to 1-1/\alpha$.
A direct numerical iteration confirms this rate: starting
from $x_0=0.3$, the ratios converge to $0.5$ for $\alpha=2$
and to $0.6\overline{6}$ for $\alpha=3$, matching
$1-1/\alpha$ to all computed digits.
Table~\ref{tab:epi} summarizes all convergence rates.

\begin{table}[ht]
\caption{Convergence rates for Example~\ref{ex:epi}.
  MAP and CRM rates from~\cite{Arefidamghani:2021};
  \cCRM\ rates established above.}
\label{tab:epi}
\renewcommand{\arraystretch}{1.3}
\begin{tabular}{ccccc}
\toprule
$\beta$ & $\alpha$ & MAP
  & CRM~\cite{Arefidamghani:2021} & \cCRM \\
\midrule
$\beta=0$ & $\alpha>1$
  & sublinear & linear ($1-1/\alpha$) & linear ($1-1/\alpha$) \\
$\beta>0$ & $1<\alpha<2$
  & linear    & superlinear & superlinear${}^\dagger$ \\
$\beta>0$ & $\alpha\geq 2$
  & linear    & superlinear
  & \textbf{Q-quadratic}${}^{\dagger\dagger}$ \\
\bottomrule
\end{tabular}
\par\smallskip
{\footnotesize
${}^\dagger$~For the halfplane pair $\{X,Y\}$,~\cite{Behling:2024}
already gives superlinear via~\ref{H1}; for the line pair
$\{X,Y'\}$ below, $\inte(X\cap Y')=\emptyset$ so~\ref{H1}
fails and Corollary~\ref{cor:super} via~\ref{H1prime} is
required.\\[2pt]
${}^{\dagger\dagger}$~Q-quadratic is new for both pairs;
\cite{Behling:2024} establishes only superlinear under the
$\mathcal{C}^1$ assumption.}
\end{table}

Replace $Y$ by $Y'\coloneqq\{y=0\}$.
Then $\aff(Y')=\{y=0\}\neq\re^2=\aff(X)$, so neither
Corollary~\ref{cor:super} nor Theorem~\ref{thm:main} applies
directly.
Nevertheless, for any $z^0\in Y'$ the \cCRM\ sequences for
$\{X,Y\}$ and $\{X,Y'\}$ are identical:
$P_X((x_k,0))=(u_k,u_k^\alpha)$ has $u_k^\alpha>0$, so
$P_Y(P_X((x_k,0)))=P_{Y'}(P_X((x_k,0)))=(u_k,0)$; the
same holds for the second MAP step; the centralized point
$z_C^k=(a_k,h_k)$ has $h_k>0$, giving
$R_Y(z_C^k)=R_{Y'}(z_C^k)=(a_k,-h_k)$; and the circumcenter
lies on $\{y=0\}$ as shown above.
All rates in Table~\ref{tab:epi} therefore hold for
$\{X,Y'\}$, establishing superlinear and Q-quadratic
convergence of \cCRM\ even when $\aff(X)\neq\aff(Y)$.
\end{example}

The remaining examples show that
$\aff(X)=\aff(Y)\subsetneq\re^n$ is ubiquitous in
structured feasibility; in each case Theorem~\ref{thm:orig}
is inapplicable because equality constraints force $X\cap Y$
into a proper affine subspace.

\begin{example}[Equality-constrained smooth feasibility]%
\label{ex:eq_feas}
Given $A\in\re^{m\times N}$ of rank $m$, set $L=\{Az=b\}$,
$X=\{z\in L:g(z)\leq 0\}$, and $Y=\{z\in L:h(z)\leq 0\}$
for smooth convex $g,h:\re^N\to\re$.
Suppose a Slater point exists: some $z_0\in L$ with
$g(z_0)<0$ and $h(z_0)<0$. It lies in
$\operatorname{ri}(X)\cap\operatorname{ri}(Y)$, which
is~\ref{H1prime}, and makes $X$ and $Y$ full-dimensional in
$L$, so $\aff(X)=\aff(Y)=L$. For any $m\geq 1$ the
intersection has empty interior in the ambient space,
$\inte_{\re^N}(X\cap Y)=\emptyset$, so Theorem~\ref{thm:orig}
does not apply.
Condition~\ref{H2pp} holds when $g$ and $h$ are
$\mathcal{C}^2$ with nonzero gradient on the respective
boundaries within $L$; Theorem~\ref{thm:main} then gives
Q-quadratic convergence, compared with at most linear
convergence of MAP~\cite{Bauschke:1993}.
A canonical instance is the equality-constrained ellipsoidal
feasibility problem $Az=b$, $\|B_iz-c_i\|\leq r_i$,
$i=1,2$, which arises in the initialization phase of
interior-point methods for quadratically constrained
programs.
\end{example}

\begin{example}[SOCP feasibility with equality constraints]%
\label{ex:socp}
For the system $Az=b$ and $Cz+d\in\mathcal{K}$,
$\mathcal{K}=\{(t,u):\|u\|\leq t\}$, set $L=\{Az=b\}$ and
split the cone constraint into two convex sets within $L$.
The boundary of $\mathcal{K}$ is $\mathcal{C}^\infty$ away
from the apex, so~\ref{H2pp} holds at any limit point not
at the apex, and Theorem~\ref{thm:main} gives Q-quadratic
convergence.
\end{example}

\begin{example}[Semidefinite feasibility]\label{ex:sdp}
Consider
\begin{equation}\label{eq:sdp_feas}
  \text{find}\;\Sigma\in\mathbb{S}^n:\quad
  \mathcal{A}(\Sigma)=b,\quad
  \Sigma\succeq 0,\quad
  \norm{\Sigma-\hat\Sigma}_F\leq r,
\end{equation}
where $\mathcal{A}:\mathbb{S}^n\to\re^m$ is linear.
Set $L=\{\mathcal{A}(\Sigma)=b\}$,
$X=\mathbb{S}^n_+\cap L$, and
$Y=\{\|\Sigma-\hat\Sigma\|_F\leq r\}\cap L$.
Existence of $\Sigma_0\in L$ with $\Sigma_0\succ 0$ and
$\|\Sigma_0-\hat\Sigma\|_F<r$ gives~\ref{H1prime}, and the
same strictly feasible $\Sigma_0$ forces $X$ and $Y$ to be
full-dimensional in $L$, so $\aff(X)=\aff(Y)=L$. Since
$\inte_{\mathbb{S}^n}(X\cap Y)=\emptyset$,
Theorem~\ref{thm:orig} does not apply.
The boundary $\partial_L Y$ is $\mathcal{C}^\infty$ whenever
$\hat\Sigma\notin L$, which holds generically.
At a limit point $\bar\Sigma$ where
$\lambda_{\min}(\bar\Sigma)=0$ is a simple eigenvalue
(equivalently $\operatorname{rank}\bar\Sigma=n-1$, the
generic rank on $\partial\mathbb{S}^n_+$), the cone boundary
is locally a $\mathcal{C}^\infty$ hypersurface of
$\mathbb{S}^n$, with normal the spectral projector onto the
kernel of $\bar\Sigma$~\cite[Theorem~5]{Alizadeh:1997}. If
in addition this normal is not orthogonal to $V_L$, a
transversality condition that holds for all $L$ outside a
measure-zero set, then $\partial\mathbb{S}^n_+$ meets $L$
transversally, $\partial_L X$ is a $\mathcal{C}^\infty$
hypersurface of $L$ at $\bar\Sigma$, and~\ref{H2pp} holds;
Theorem~\ref{thm:main} then gives Q-quadratic convergence.
Problem~\cref{eq:sdp_feas} arises as the feasibility
subproblem in SDP solvers~\cite{Alizadeh:1997}, where a
strictly feasible starting point is required before the
main algorithm begins.
\end{example}

\begin{example}[Fixed-trace matrix feasibility]%
\label{ex:matrix}
In $\mathbb{S}^n$, consider
\[
  \text{find}\;\Sigma\in\mathbb{S}^n:\quad
  \operatorname{tr}(\Sigma)=1,\quad
  \lambda_{\max}(\Sigma)\leq a,\quad
  \norm{\Sigma-\hat\Sigma}_F\leq r.
\]
With $L=\{\operatorname{tr}(\Sigma)=1\}$,
$X=\{\lambda_{\max}\leq a\}\cap L$, and
$Y=\{\|\cdot-\hat\Sigma\|_F\leq r\}\cap L$, the trace
constraint confines both sets to
$L\subsetneq\mathbb{S}^n$, and
$\inte_{\mathbb{S}^n}(X\cap Y)=\emptyset$.
Assume a strictly feasible point: some $\Sigma_0\in L$ with
$\lambda_{\max}(\Sigma_0)<a$ and
$\|\Sigma_0-\hat\Sigma\|_F<r$. This gives~\ref{H1prime} and
makes $X,Y$ full-dimensional in $L$, so $\aff(X)=\aff(Y)=L$.
At a limit point $\bar\Sigma$ where the active constraint
$\lambda_{\max}(\bar\Sigma)=a$ is attained at a simple
eigenvalue, the spectral boundary
$\{\lambda_{\max}=a\}\cap L$ is $\mathcal{C}^2$ near
$\bar\Sigma$~\cite{Lewis:1996} (simplicity is generic), and
the Frobenius-ball boundary is $\mathcal{C}^\infty$; thus
\ref{H2pp} holds and Theorem~\ref{thm:main} gives
Q-quadratic convergence.
This problem arises in robust covariance estimation and
portfolio optimization~\cite{Ledoit:2004}, where the trace
constraint encodes a budget normalization.
\end{example}

\section{Numerical experiments}\label{sec:numerics}

We close with two experiments that put the rate of
Theorem~\ref{thm:main} to the test on the structured problems
of Section~\ref{sec:applications}, and that compare \cCRM\
against the method of alternating projections (MAP) and
Douglas--Rachford (DR) on the same instances. Both problems
live in a proper affine subspace, so the interior hypothesis
of~\cite{Behling:2024} fails and only the relative-interior
analysis of this paper applies. Throughout we measure
$\dist(z^k,X\cap Y)$ by projecting each iterate onto the
intersection with Dykstra's algorithm, and we report the
quadratic ratio
$\dist(z^{k+1},X\cap Y)/\dist(z^k,X\cap Y)^2$. The
experiments were implemented in
Julia~\cite{Bezanson:2017}; a companion repository provides
the Julia code port that reproduces
every number and figure below.\footnote{Available at
\url{https://github.com/yunierbello/ccrm-quadratic}.}

The first instance is the equality-constrained ellipsoidal
problem of Example~\ref{ex:eq_feas}, reduced to its
two-dimensional affine hull by the isometry of
Lemma~\ref{lem:equivariance}. We take two ellipses,
$X=\{(x/2.2)^2+y^2\leq 1\}$ and
$Y=\{(x-3)^2+(y/2.2)^2\leq 1\}$, whose interiors overlap in
a thin lens; from $z^0=(1.5,3.0)$ the iterates ride down to
the upper boundary crossing $\bar z\approx(2.017,0.400)$,
where the curvatures are $\kappa_X\approx 1.074$ and
$\kappa_Y\approx 0.215$. Table~\ref{tab:numerics2} shows the
quadratic ratio settling near $0.094$, comfortably below the
upper bound $\tfrac12\max(\kappa_X,\kappa_Y)/\omega\geq
0.537$ of Theorem~\ref{thm:main} (the inequality uses
$\omega\leq 1$); the gap is the looseness of the F\'{e}jer
factor discussed in Remark~\ref{rem:rate}. \cCRM\ reaches
$\dist<10^{-10}$ in three steps, whereas MAP has not done so
after twenty-five.

\begin{table}[ht]
\caption{\cCRM\ on the equality-constrained ellipsoidal
  problem of Section~\ref{sec:numerics}. The quadratic ratio
  stabilizes near $0.094$; the row $k=3$ is at machine
  precision, where the squared denominator underflows.}
\label{tab:numerics2}
\renewcommand{\arraystretch}{1.2}
\centering
\begin{tabular}{cccc}
\toprule
$k$ & $\dist(z^k,X\cap Y)$ & linear ratio & quadratic ratio \\
\midrule
$0$ & $2.65\times10^{\phantom{-}0}$  & $6.55\times10^{-3}$ & $0.0025$ \\
$1$ & $1.74\times10^{-2}$ & $1.74\times10^{-3}$ & $0.1000$ \\
$2$ & $3.01\times10^{-5}$ & $2.85\times10^{-6}$ & $0.0945$ \\
$3$ & $8.59\times10^{-11}$ & --- & --- \\
\bottomrule
\end{tabular}
\end{table}

The second instance is the fixed-trace spectral feasibility
problem of Example~\ref{ex:matrix} in $\mathbb{S}^8$, with
$L=\{\operatorname{tr}\Sigma=1\}$,
$X=\{\lambda_{\max}\leq 0.35\}\cap L$, and a Frobenius ball
$Y$ of radius $0.4$ about a fixed center on $L$. Projections
onto $X$ and $Y$ are computed by Dykstra over the spectral
(or ball) constraint and the trace hyperplane. From a
starting matrix on $L$ at distance $\approx 11.9$ from the
solution, \cCRM\ again converges Q-quadratically, with the
quadratic ratio stabilizing near $0.17$ and
$\dist<10^{-13}$ after three steps; MAP needs nine steps to
reach $10^{-10}$ and DR is far slower still.
Figure~\ref{fig:convergence} plots all three methods on both
problems and makes the separation plain: the \cCRM\ curve
bends downward, the signature of a quadratic rate, while
MAP is straight (linear) and DR is nearly flat.

\begin{figure}[ht]
\centering
\begin{subfigure}[t]{0.49\textwidth}
  \centering
  \includegraphics[width=\textwidth]{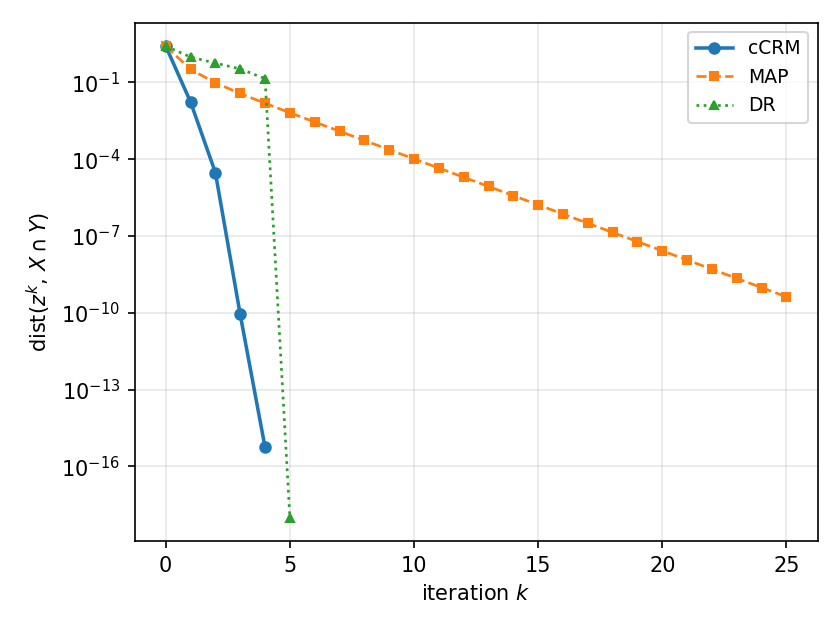}
  \caption{Equality-constrained ellipsoids in $\re^2$.}
  \label{fig:conv_ellipsoid}
\end{subfigure}
\hfill
\begin{subfigure}[t]{0.49\textwidth}
  \centering
  \includegraphics[width=\textwidth]{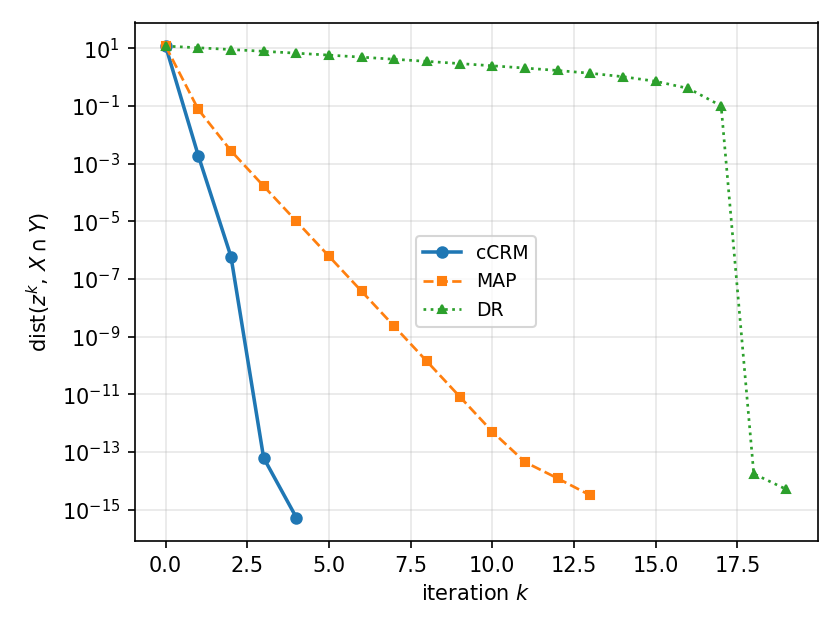}
  \caption{Fixed-trace spectral feasibility in $\mathbb{S}^8$.}
  \label{fig:conv_spectral}
\end{subfigure}
\caption{Distance to $X\cap Y$ versus iteration for \cCRM,
  MAP, and DR. On both problems \cCRM\ is Q-quadratic (the
  curve bends downward), MAP is linear (straight), and DR is
  slower. The vertical axis is logarithmic.}
\label{fig:convergence}
\end{figure}

These experiments confirm the analysis on two counts. The
quadratic ratios stabilize at finite values below the
predicted upper bound, as Theorem~\ref{thm:main} requires,
and the three-step descent to machine precision is exactly
the behavior a Q-quadratic rate produces. They also show that
the rate is not an asymptotic curiosity: on problems where
the classical interior hypothesis is unavailable, \cCRM\
solves the feasibility problem in a handful of steps where
the standard projection methods take many.

\section{Discussion}\label{sec:disc}

The proofs of Corollary~\ref{cor:super} and
Theorem~\ref{thm:main} rest on two ingredients.
First, when $\aff(X)=\aff(Y)=L$, the \cCRM\ iterates enter
$L$ after one step and remain there, so an isometry
$\phi:L\to\re^d$ reduces the problem to one with
$\inte(X\cap Y)\neq\emptyset$, where Theorem~\ref{thm:orig}
applies.
Second, Lemma~\ref{lem:curvature} gives
$\dist(T(z^k),X)=O(\dist(z^k,X\cap Y)^2)$, because
$T(z^k)$ lies on the tangent hyperplane to $\partial X$ at
$P_X(z_C^k)$ and the boundary has $\mathcal{C}^2$ contact
with that hyperplane; the error bound~\cref{eq:EB} then
converts this into~\cref{eq:quad_rate}.

The asymptotic constant in~\cref{eq:quad_rate} is local:
it depends only on $\kappa_X$, $\kappa_Y$, and $\omega$ at
$\bar z$, not on the global geometry of $X$ and $Y$ or on
how $L$ sits in $\re^n$.
Table~\ref{tab:numerics} gives an observed asymptotic
quadratic ratio $\approx 0.555$ for the constant-curvature
case $\kappa=1/2$, comfortably below the upper bound
$\tfrac12\kappa/\omega$ of Theorem~\ref{thm:main}; the gap
quantifies the looseness of the F\'{e}jer estimate used in
the proof.

The case $\aff(X)\neq\aff(Y)$ is open.
When the two affine hulls differ, the isometry reduction is
unavailable: $P_X(z^k)$ appears to acquire a component of
order $\norm{z^k-\bar z}$ in the direction transverse to
$\aff(Y)$, which the curvature argument cannot absorb,
because the second-order contact estimate of
Lemma~\ref{lem:curvature} controls only the component within
the common affine hull.
Example~\ref{ex:epi} shows that this obstruction is not
always fatal: for the pair $\{X,Y'\}$ the sequences for
$\{X,Y\}$ and $\{X,Y'\}$ coincide by a symmetry argument,
so all rates transfer without the isometry reduction.
Whether such a reduction exists more broadly, or whether a
different approach can establish superlinear or Q-quadratic
convergence for $\aff(X)\neq\aff(Y)$ in general, remains an
open problem.

\noindent {\bf Acknowledgements: }
The author thanks Roger Behling,
Alfredo N.~Iusem, and Luiz-Rafael Santos for many fruitful
discussions on the circumcentered-reflection method and its
variants~\cite{Behling:2018b,Behling:2018a,Arefidamghani:2021,Behling:2024} that directly motivated this work.

\bibliographystyle{spmpsci}

\end{document}